\documentclass[12pt]{article}
\usepackage{amsmath,amsfonts,amssymb}
\usepackage{alltt}

\usepackage{amsmath,amsfonts,ifthen,fullpage,enumerate}  
\usepackage{mathrsfs}
\usepackage{mathtools}

\usepackage{enumerate}

\usepackage{amsmath,amsfonts,amssymb}

\usepackage{makecell}  

\usepackage{graphicx}        

\usepackage[table]{xcolor}   

\oddsidemargin       -0.0in
\evensidemargin     -0.1in
\textwidth    6.2in         
\topmargin    +0.1in        
\textheight   9.0in   
\headheight   0.20in  
\headsep      0.0in   
\marginparwidth 0in
\marginparsep   0.1in
\parindent    20pt
\parskip = 0pt

\def\Co{\mathop{\rm Co}\nolimits}

\def\spam{\mathop{\rm span}\nolimits}

\def\Re{\mathop{\rm Re}\nolimits}
\def\Im{\mathop{\rm Im}\nolimits}

\def\trace{\mathop{\rm trace}\nolimits}

\def\rank{\mathop{\rm rank}\nolimits}

\def\diag{\mathop{\rm diag}\nolimits}

\def\pmat#1{\begin{pmatrix}#1\end{pmatrix}}

\def\question#1{{\bf Question: }#1}
\def\question#1{}

\def\R{\mathbb{R}}

\def\CC{\mathbb{C}}

\def\FF{\mathbb{F}}
\def\HH{\mathbb{H}}

\def\OO{\mathbb{O}}
\def\PP{\mathbb{P}}

\def\Cd{\C^d}
\def\Fd{\FF^d}

\def\Hd{\HH^d}
\def\Hn{\HH^n}
\def\Rd{\R^d}

\def\C{\mathbb{C}}

\def\implies{\Longrightarrow}

\newcommand{\RR}{\mathbb{R}}

\newtheorem{theorem}{Theorem}[section]

\newtheorem{lemma}{Lemma}[section]
\newtheorem{example}{Example}[section]

\newtheorem{proposition}{Proposition}[section]
\newtheorem{definition}{Definition}[section]
\newtheorem{conjecture}{Conjecture}
\newenvironment{proof}{{\noindent \it
Proof.}}{\hfill$\Box$\medskip}
%
%

\newif\ifdraft\def\draft{\drafttrue\hoffset=.8truecm\showlabeltrue
\def\comment##1{{\bf comment: ##1}}
\headline={\sevenrm \hfill \ifx\filenamed\undefined\jobname\else\filenamed\fi%
(.tex) (as of \ifx\updated\undefined???\else\updated\fi)
 \TeX'ed at {\hour\time\divide\hour by 60{}%
\minutes\hour\multiply\minutes by 60{}%
\advance\time by -\minutes
\the\hour:\ifnum\time<10{}0\fi\the\time\  on \today\hfill}}
}

\def\inpro#1{\langle#1\rangle}
\def\ip#1{\langle\kern-.28em\langle#1\rangle\kern-.28em\rangle_\nu}

\def\cH{{\cal H}}

\def\cV{{\cal V}}
\def\cW{{\cal W}}

\def\norm#1{\Vert#1\Vert}
\def\openR{{{\rm I}\kern-.16em {\rm R}}}

\def\Fd{\FF^d}
\let\ga\alpha

\let\gb\beta

\let\gD\Delta

\let\gth\theta

\let\gl\lambda

\let\gL\Lambda

\let\gs\sigma

\let\gO\Omega
\let\ga\alpha

\let\gb\beta

\let\gs\sigma
\def\inpro#1{\langle#1\rangle}

\def\Im{\mathop{\rm Im}\nolimits}

\def\ker{\mathop{\rm ker}\nolimits}
\def\GL{\mathop{\it GL}\nolimits}

\def\Iff{\hskip1em\Longleftrightarrow\hskip1em}
\def\Implies{\hskip1em\Longrightarrow\hskip1em}

 
\def\formeq{\the\sectionno.\the\equationno}  
\def\elabel#1/#2/#3/{\global\advance\equationno by 1 %
\ifx#1\empty\else\emember#1%
\ifshowlabel\marginal{\string#1}\fi\fi%
\ifmmode\eqno{#3(\formeq#2)}\else#3\formeq#2\fi} 

\def\makeblanksquare#1#2{
\dimen0=#1pt\advance\dimen0 by -#2pt
      \vrule height#1pt width#2pt depth0pt\kern-#2pt
      \vrule height#1pt width#1pt depth-\dimen0 \kern-#1pt
      \vrule height#2pt width#1pt depth0pt \kern-#2pt
      \vrule height#1pt width#2pt depth0pt
}

\font\bfbigtype=cmbx10 scaled\magstep2


\title{\bf 
Tight frames over the quaternions
and equiangular lines
}

\author{Shayne Waldron\\
 \\
Department of Mathematics \\ University of Auckland\\
Private
Bag 92019, Auckland, New Zealand\\
e--mail: waldron@math.auckland.ac.nz}

\begin{document}

\maketitle 

\begin{abstract}
We show that much of the theory of finite tight frames can be generalised 
to vector spaces over the quaternions. This includes the variational characterisation, 
group frames, and the characterisations of projective and unitary equivalence.
We are particularly interested in sets of equiangular lines (equi-isoclinic subspaces)
and the groups associated with them,
and how to move them between the spaces $\Rd$, $\Cd$ and $\Hd$.
We discuss what the analogue of Zauner's conjecture for equiangular lines
in $\Hd$ might be.

\end{abstract}

\bigskip
\vfill

\noindent {\bf Key Words:}
finite tight frames,
quaternionic equiangular lines,
equi-isoclinic subspaces,
equichordal subspaces,
projective unitary equivalence over the quaternions,
group frames,
quaternionic reflection groups,
representations over the quaternions,
projective spherical $t$-designs,
Frobenius-Schur indicator,
double cover of $A_6$.

\bigskip
\noindent {\bf AMS (MOS) Subject Classifications:}
primary
05B30, \ifdraft (Other designs, configurations) \else\fi
15B33, \ifdraft (Matrices over special rings (quaternions, finite fields, etc.) \else\fi
20G20, \ifdraft Linear algebraic groups over the reals, the complexes, the quaternions \else\fi
42C15, \ifdraft General harmonic expansions, frames  \else\fi
51M20, \ifdraft (Polyhedra and polytopes; regular figures, division of spaces [See also 51F15]) \else\fi
\quad
secondary
15B57, \ifdraft (Hermitian, skew-Hermitian, and related matrices) \else\fi
20C25, \ifdraft Projective representations and multipliers \else\fi
51M15, \ifdraft (Geometric constructions in real or complex geometry) \else\fi
65D30, \ifdraft (Numerical integration) \else\fi
94A12. \ifdraft (Signal theory [characterization, reconstruction, etc.]) \else\fi

\vskip .5 truecm
\hrule
\newpage

\section{Introduction}

Tight frames are a notion of redundant orthonormal bases which is of
both theoretical and practical interest \cite{W18}. Their recent development
has been driven by connections with algebraic combinatorics 
and applications to quantum physics, signal analysis and engineering.
In all of these settings, tight frames for which the vectors/lines are ``well spread out''
are desired, with equiangular tight frames being of the most interest. 

We consider tight frames over the quaternions, motivated by equiangular
tight frames in $\Rd$ and $\Cd$. Given enough care, much of the theory generalises to quaternionic
Hilbert space $\Hd$, including the variational characterisation, group frames
and $G$-matrices, and the characterisation of projective and unitary equivalence.
We consider in detail how to move between tight frames (and associated linear 
operators) in $\Rd$, $\Cd$ and $\Hd$. 

The maximum possible number of equiangular lines in $\Rd$ is ${1\over2}d(d+1)$,
and in $\Cd$ it is $d^2$. The bound for real equiangular lines is rarely met,
but for complex lines the bound is conjectured to hold in all cases: Zauner's conjecture
on the existence of Weyl-Heisenberg SICs \cite{Zau10}, \cite{ACFW18}. 
For $\Hd$ the bound is $2d^2-d$,
for a maximum of six equiangular lines in $\HH^2$, and fifteen in $\HH^3$. We give an elementary construction
of five equiangular lines in $\HH^2$, and investigate the maximal configuration of
six equiangular lines in $\HH^2$ recently obtained independently by
\cite{K08} and \cite{B20}. Recently, the existence of fifteen equiangular 
lines in $\HH^3$, viewed as a simplex in the projective space $\HH\PP^2$, has been 
proved by \cite{CKM16} using a Newton-Kantorovich theorem.
Based on these two data points, and my instincts (there is a lot of space in $\Hd$ and the 
beauty of the quaternions), I had initially thought the quaternionic version of
Zauner's conjecture:
$$ \hbox{\em There exists $2d^2-d$ equiangular lines in $\Hd$, for each $d$}, $$
should hold. However, calculations of \cite{CKM16}, suggests that this fails for $d=4$,
and the analogous situation for the octonians is much worse. 
Thus, it seems that equiangular lines in $\CC^d$ may be a high point 
for satisfying the estimates on the maximal number of equiangular lines, 
with real and quaternionic equiangular lines 
rarely meeting the bound (``filling up all the space'') due to algebraic limitations 
of the field involved, i.e., $\RR$ not being algebraically closed and $\HH$
not being commutative.
Still, there is much interest in the maximal sets of equiangular lines
in $\Hd$, and for those in $\Rd$ (which have been studied for over half a 
century). Therefore, I present the following conjecture, which can play the role of 
Zauner's conjecture for the theory of quaternionic equiangular lines:

\begin{conjecture}
\label{HlinesConjectI}
There exists more than $d^2$ tight equiangular lines in $\Hd$, for each $d\ge 2$.
\end{conjecture}

\noindent
We observe that:
\begin{itemize}
\item
This says ``there is something going on'', i.e., there are interesting equiangular lines in $\Hd$
(ones which cannot be viewed as lines in $\Cd$) for every dimension $d$. 
\item This conjecture is known to hold only for $d=2,3$, and is otherwise open.
\item For some $d$, there do exist sets of $\le d^2$ tight quaternionic equiangular lines, e.g., 
five equiangular lines in $\HH^3$ (Example \ref{fivetightH^3}) and
six in $\HH^4$ (Example \ref{sixtightH^4}).
\item It is conjectured in \cite{CKM16} (Conjecture 4.2) that asymptotically 
there exists $N \ge (4-\sqrt{2})d$ tight equiangular 
	lines in $\Hd$, as $d\to\infty$.
\end{itemize}




We now give the basic theory of inner product spaces over the quaternions,
to a point where we are able to define and discuss tight frames over $\HH$. 

\subsection{Inner products over the quaternions}

The reader is assumed to be familiar with the {\bf quaternions} $\HH$ which are an 
extension of the complex numbers $x+iy$ to a noncommutative associative algebra over the real numbers
(skew field) consisting of elements:
$$ q
=q_1+ q_2i +q_3 j +q_4 k 
=(q_1+ q_2i) +(q_3  +q_4 i)j
\in \HH, \qquad q_j\in\RR, $$
with the (noncommutative) multiplication given by Hamilton's famous formula that
$i^2=j^2=k^2=ijk=-1$.

Since the multiplication is not commutative, we must distinguish between left and 
right vector spaces (modules) over $\HH$. Since we wish to appropriate much 
of matrix theory, we take our vector spaces to be right $\HH$-vector spaces.
Thus $\HH$-linear maps $L$ have the form
$$ L(v_1\ga_1+\cdots+v_n\ga_n) = L(v_1)\ga_1+\cdots+L(v_n)\ga_n, $$
and can be represented by matrices, with the usual rules for multiplication,
i.e.,
$$ (AB)_{jk}=\sum_\ell a_{j\ell}b_{\ell k}, $$
where order of multiplication in $a_{j\ell}b_{\ell k}$ cannot be reversed
(see \cite{Z97}).
For those who may have noticed, I apologise for using $j$ and $k$ above as indices
for matrix entries, and elsewhere as quaternion units (as is often done with 
the complex unit $i$). 

The {\bf conjugate} and {\bf norm} of a quaternion $q=q_1+ q_2i +q_3 j +q_4 k\in\HH$ 
$$ \overline{q}:=q_1-q_2i-q_3j-q_4k, \qquad
 |q|:=\sqrt{q\overline{q}}=\sqrt{q_1^2+q_2^2+q_3^2+q_4^2}, $$
generalise the conjugate and modulus of a complex number $x+iy$, and 
allow the inner product (and associated norm) to be extended to $\HH$ as follows.
We note that
$$ \overline{ab} = \overline{b}\, \overline{a}, \qquad a,b\in\HH
\Implies (AB)^*=B^*A^*\quad\hbox{(for matrices over $\HH$)}. $$

\begin{definition} Let $\cV$ be a finite-dimensional (right) vector space 
over $\FF=\RR,\CC,\HH$. 
Then an $\FF$-valued map $\inpro{\cdot,\cdot}:\cV\times \cV\to\FF$ is 
called an {\bf inner product} if it satisfies
\begin{enumerate}
\item Conjugate symmetry: $\inpro{v,w}=\overline{\inpro{w,v}}$.
\item Linearity in the second variable: $\inpro{v,w\gb}=\inpro{v,w}\gb$, $\inpro{u,v+w}=\inpro{u,v}+\inpro{u,w}$. 
\item Positive definiteness: $\inpro{v,v}>0$, $v\ne 0$.
\end{enumerate}
for all vectors $v,w,u\in \cV$ and scalars $\gb\in\FF$.
\end{definition}

\noindent
We will say that $\cV$ is a real, complex or quaternionic inner product space 
(respectively).
The theory of inner product spaces evolves as in the real and complex cases, 
though it is not well known, e.g., the Cauchy-Schwarz inequality
\begin{equation}
\label{CauchySchwarz}
|\inpro{v,w}|\le\norm{v}\norm{w}, \qquad \norm{v}:=\sqrt{\inpro{v,v}},
\end{equation}
holds (with equality if and only if $v$ and $w$ are linearly dependent), 
though it is not mentioned in the monograph \cite{R14}. I think this is in part 
due to the fact that real and complex valued inner products are often also defined on 
$\HH$-vector spaces. A good treatment is given in \cite{C80} (``unitary inner products'') 
and  \cite{GMP13} (``Hermitian quaternionic scalar products'', which includes Cauchy-Schwarz). 
The prototype of such an inner product is the {\bf Euclidean}
(or {\bf standard}) inner product
\begin{equation}
\label{Euclideaninprodef}
\inpro{v,w}:=v^* w=\sum_j \overline{v_j}w_j, \qquad v,w\in\Hd.
\end{equation}
Throughout, we will use the notation $\inpro{v,w}$ for the Euclidean inner product,
sometimes writing $\inpro{v,w}_\FF$ to emphasize when 
all the entries of vectors $v$ and $w$ 
are in $\FF=\RR,\CC,\HH$.
The Euclidean inner product on the entries of a matrix is
the {\bf Frobenius inner product}
\begin{equation}
\label{Frobinpro}
\inpro{A,B}_F:=\trace(A^*B), \qquad \norm{A}_F^2 
=\inpro{A,A}_F=\sum_j\sum_k |a_{jk}|^2.
\end{equation}
In light of the noncommutativity of the quaternions, we note that scalars come
outside an inner product (as we have defined it) as follows
\begin{equation}
\label{inprohomogeniety}
\inpro{v\ga, w\gb} = \overline{\ga}\inpro{v,w}\gb.
\end{equation}
The notion of orthogonality, and the Gram-Schmidt process extends in the obvious fashion.
There is no need for notions of ``left'' and ``right'' orthogonality, since
$$ \inpro{v,w}=0 \Iff \inpro{w,v}=\overline{\inpro{v,w}}=0. $$
The Riesz representation also extends to inner products over $\HH$, and so the 
{\bf adjoint} of a linear map $T:\cV\to \cW$ between finite-dimensional inner product spaces
can be defined as the unique linear map $T^*:\cW\to \cV$ satisfying
$$ \inpro{T^*w,v}=\inpro{w,Tv}, \qquad \forall v\in \cV, \ w\in \cW. $$
If $T$ and $T^*$ are represented as matrices $[T]$ and $[T^*]$ with respect to orthonormal bases 
$(v_j)$ and $(w_k)$, so that
$v=\sum_j v_j\inpro{v_j,v}$, $\forall v\in \cV$, 
and $w=\sum_k w_k\inpro{w_k,w}$, $\forall w\in \cW$, 
then 
$$ [T]_{jk} 
= \inpro{ w_j , T v_k }
= \inpro{ T^* w_j , v_k }
= \overline{\inpro{ v_k , T^* w_j }} 
= \overline{ [T^*]_{kj}}, $$
and hence the matrix $[T^*]$ is the conjugate transpose of the matrix $[T]$.
For this reason, it is often assumed that the inner product is the standard inner product on $\Hd$,
and all calculations are done with matrices, with $A^*$ defined to be
the conjugate transpose (or {\bf Hermitian transpose}) of the matrix $A$, 
as is the case in \cite{R14}.
The adjoint and Hermitian transpose 
satisfy some (but not all) of the usual properties, including
$$ (AB)^*=B^*A^*, \qquad 
(A+B)^*=A^*+B^*, \qquad (A^*)^*=A, $$
$$ (A^*)^{-1}=(A^{-1})^*\quad \hbox{(for $A$ invertible)}.$$
Note that the transpose does not satisfy $(AB)^T=B^TA^T$ (since $\HH$ is not commutative).
It can be shown that if $AB=I$ for square matrices over $\HH$ then
$BA=I$, and so a right inverse exists for $A$ if and only if a left inverse exists, 
and these inverses are equal (and denoted by $A^{-1}$).

One subtle point, which is not obvious from the matrix formulation, is that 
scalar multiplication by $\gb\in\HH\setminus\RR$, i.e., $R_\gb:\cV\to \cV:v\mapsto v\gb$ is not an
$\HH$-linear map, since
$$ R_\gb (v\ga) = (v\ga)\gb
= v(\ga\gb)\ne v(\gb\ga)
=(v\gb)\ga=(R_\gb v)\ga \qquad
\hbox{(in general)}. $$
Left multiplication of $\Hd$ by $\gb$ defines an $\HH$-linear map $L_\gb:\Hd\to\Hd:v\mapsto \gb v$,
but this is dependent on a choice of basis: it is the linear map which maps
$e_j\mapsto e_j\gb$, i.e., the linear map whose matrix representation
with respect to the standard basis $(e_j)$ is $\gb I$ 
(see the discussion of \cite{GMP13} \S 3.1).
On the other hand,
multiplication of a fixed vector $v\in \cV$ by scalars, 
i.e.,  $[v]:\HH\to \cV:\ga\mapsto v\ga$ 
is an $\HH$-linear map:
$$ [v](\gb_1\ga_1+\gb_2\ga_2)
= v (\gb_1\ga_1+\gb_2\ga_2)
= (v\gb_1)\ga_1+(v \gb_2)\ga_2
= ([v]\gb_1)\ga_1+([v]\gb_2)\ga_2. $$
Its adjoint $[v]^*:\cV\to\HH$ is given by $[v]^*=\inpro{v,\cdot}$, since
$$ \inpro{ \ga , [v]^*w } 
= \inpro{[v]\ga,w}
=\inpro{v\ga,w}
=\overline{\ga}\inpro{v,w}
=\overline{\ga}\inpro{1,\inpro{v,w}}
= \inpro{\ga,\inpro{v,w}}. $$
The map $[v]$ is sometimes abbreviated simply as $v$, especially 
when $v\in\Hd$ is thought of as a column vector, i.e., 
as an element of $\HH^{d\times 1}$.
More generally, a {\bf synthesis map}
$$ V=[v_1,\ldots,v_n]:\Hn\to\cV:a\mapsto v_1a_1+\cdots+v_n a_n, $$
for a sequence of vectors $v_1,\ldots,v_n\in\cV$, has adjoint 
the {\bf analysis map}
$$ V^*:\cV\to\Hn: v\mapsto(\inpro{v_j,v})_{j=1}^n. $$

\section{Tight frames}

A {\bf frame} for a Hilbert space $\cH$ is a sequence of vectors $(v_j)$ 
satisfying the 
condition
\begin{equation}
\label{framedefn}
A\norm{v}^2\le\sum_j|\inpro{v_j,v}|^2\le B\norm{v}^2, \qquad\forall v\in\cH,
\end{equation}
where $A,B>0$ are constants.
From this, a ``frame expansion'' follows, which takes a particularly simple form
when $A=B$, i.e.,
$$  v= {1\over A}\sum_j v_j \inpro{v_j,v}, \qquad\forall v\in\cH.  $$
A prominent early example of the use of such ``generalised orthonormal bases'' is
in the theory of wavelets. 
Recently, frames have been considered for quaternionic
Hilbert space, see, e.g., \cite{KTS17}, \cite{VSS20} (which deal primarily
with the frame operator and the construction of dual frames), 
\cite{KPS21} (equiangular lines and Hadamard matrices),
and
\cite{NS22} which considers the connectedness of the algebraic variety 
of quaternionic frames with given norms.
Here we consider
tight frames (where the dual frame is the frame itself) with a particular emphasis
on the classification and construction of such frames. This is related to earlier 
work of Hoggar \cite{H76}, \cite{H82} and others, which implicitly considers tight frames 
over quaternionic (and even octonionic) Hilbert spaces.

\subsection{Tight frames defined and unitary equivalence}

We will say that a sequence of vectors with synthesis map $V=[v_1,\ldots,v_n]$ 
is a {\bf tight frame} for a (finite-dimensional) quaternionic Hilbert space
$\cH$ if it satisfies (\ref{framedefn}), where $A=B$, and is {\bf 
normalised} if $A=B=1$, which can be achieved by multiplying the vectors of a tight frame by 
a suitable positive scalar.  
The {\bf frame operator} (for a sequence of vectors) is $S=VV^*$ and
the {\bf Gramian} (matrix) is $G=V^*V$. 

A linear map $U$ on $\cH$ is unitary if it preserves angles,
i.e., $\inpro{Uv,Uw}=\inpro{v,w}$, $\forall v,w$, or, equivalently 
$U^*U=I$. Unitary maps can be defined in the same way on quaternionic
Hilbert spaces. If $V=[v_1,\ldots,v_n]$ is a frame for a 
quaternionic Hilbert space, then so is any unitary image
$UV=[Uv_1,\ldots,Uv_n]$, and these frames have the same
Gramian since $(UV)^*UV=V^*U^*UV=V^*V$, and we say that they are 
{\bf unitarily equivalent}. Tight frames are studied up to unitary equivalence
(which is an equivalence relation) and multiplication by a nonzero scalar.

The monograph \cite{W18} is a good reference for those parts of
the theory of finite tight frames which we now develop.
First we consider equivalent conditions for being a tight frame.
For this, we need the polarisation identity for quaternionic Hilbert space.
Since this is not well known, we provide it with proof.

%

\begin{lemma}
\label{polarisationidlemma}
(Polarisation identity) For an inner product space over
$\FF=\RR,\CC,\HH$,
we have
$$ \inpro{v,w} = {1\over 4} \sum_{r=0}^{m-1} \Bigl(\norm{vi_r+w}^2-\norm{vi_r-w}^2\Bigr)i_r, $$
where $m=\dim_\RR(\FF)$, $(i_0,i_1,i_2,i_3)=(1,i,j,k)$,
and $\inpro{\cdot,\cdot}$ is linear in the second variable.
\end{lemma}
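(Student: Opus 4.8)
The plan is to establish the identity by a single direct expansion that covers all three fields at once, with care taken over the noncommutativity of $\HH$. Write $p:=\inpro{v,w}$, so that $\inpro{w,v}=\overline p$ by conjugate symmetry. Since each $i_r$ is a scalar, the defining rules give $\inpro{v,wi_r}=\overline{i_r}\,p$ and $\inpro{wi_r,v}=\overline p\,i_r$, while $\inpro{wi_r,wi_r}=\overline{i_r}\norm w^2 i_r=\norm w^2$ because $\norm w^2\in\RR$ commutes and $|i_r|=1$. Expanding the inner product by additivity therefore yields
$$\norm{v+wi_r}^2=\norm v^2+\norm w^2+\overline{i_r}\,p+\overline p\,i_r,$$
and the same with the two cross terms negated for $\norm{v-wi_r}^2$. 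Subtracting, the diagonal contributions $\norm v^2+\norm w^2$ cancel and leave $\norm{v+wi_r}^2-\norm{v-wi_r}^2=2\overline{i_r}\,p+2\overline p\,i_r$.

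Next I would multiply this difference on the right by $i_r$ (the order is essential) and carry the overall factor $\tfrac14$, so that the $r$-th summand reduces to $\tfrac12\bigl(\overline{i_r}\,p\,i_r+\overline p\,i_r^2\bigr)$. The heart of the argument, and the step I expect to be the main obstacle, is the noncommutative ``sandwich'' computation showing that this summand isolates exactly one real coordinate of $p$. Writing $p=p_0+p_1i+p_2j+p_3k$, a short multiplication using Hamilton's relations gives, for $r\ge1$, that $\overline{i_r}\,p\,i_r=p_0+p_ri_r-\sum_{s\notin\{0,r\}}p_si_s$ (conjugation by the unit $i_r$ fixes the real part and the $i_r$-component and flips the other two imaginary parts), whereas $\overline{i_0}\,p\,i_0=p$ trivially. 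Combined with $i_0^2=1$ and $i_r^2=-1$ for $r\ge1$, each summand collapses to the single term $p_ri_r$.

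Finally, summing over the $m=\dim_\RR\FF$ standard units gives $\sum_r p_ri_r$. Since $p=\inpro{v,w}$ lies in $\FF=\spn_\RR\{i_0,\dots,i_{m-1}\}$, it has no components along any units outside this list, so the sum equals $p=\inpro{v,w}$, as claimed; the real and complex cases are recovered as the truncations $m=1$ and $m=2$, where the missing coordinates of $p$ vanish automatically. The only delicate points are thus the scalar bookkeeping in the first step and the sandwich identity in the second; I would also note that, to match the displayed range, the summation index should terminate at $m-1$ (so that the quaternionic case uses precisely $i_0,\dots,i_3$).
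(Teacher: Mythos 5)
Your proof is correct and takes essentially the same route as the paper: both expand $\norm{v\pm wi_r}^2$ to get the cross terms $\pm\overline{i_r}\inpro{v,w}\pm\overline{\inpro{v,w}}i_r$, and your ``sandwich'' computation $\tfrac12(\overline{i_r}\,p\,i_r+\overline{p}\,i_r^2)=p_ri_r$ is just the paper's coordinate-extraction identity $\overline{i_r}q+\overline{q}i_r=2q_r$ multiplied on the right by $i_r$. Your closing remark is also right: the upper summation limit in the statement is a typo and should be $m-1$, exactly as the paper's own proof tacitly assumes.
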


\begin{proof}
We first observe 
that for a quaternion
$q =q_0+q_1i_1+q_2i_2+q_3i_3$, $q_r\in\RR$, a calculation gives
\begin{equation}
\label{qrpart}
\overline{i_r}q+\overline{q}i_r = 2 q_r, \qquad r=0,1,2,3,
\end{equation}
and we write $(q)_r=q_r$. Expanding, using the properties of the inner product, gives
\begin{align*} \norm{vi_r\pm w}^2
&=\inpro{vi_r,vi_r}+\inpro{\pm w,\pm w}
	+\inpro{vi_r,\pm w}
+\inpro{\pm w,vi_r} 
	\cr
&=\norm{v}^2+\norm{w}^2
\pm\overline{i_r} \inpro{v, w} \pm \inpro{w,v} i_r,
\end{align*}
so that
$$ \norm{vi_r+w}^2-\norm{vi_r-w}^2
= 2\bigl(\overline{i_r} \inpro{v, w} + \overline{\inpro{v,w}} i_r\bigr)
= 4(\inpro{v,w})_r, $$
which gives the result.
\end{proof}

This could also be proved by rewriting equation (3.5.1) of \cite{R14}
for $A=I$ and $(q_1,q_2,q_3)=(i,j,k)$.

The basic characterisations of normalised tight frames generalise.

\begin{proposition}
\label{tightframeequidefs}
Let $V=[v_1,\ldots,v_n]$ be sequence of vectors in a $d$-dimensional
(right) quaternionic Hilbert space $\cH$, such as $\Hd$. Then the following are
equivalent
\begin{enumerate}[\rm(i)]
\item $V$ is a normalised tight frame for $\cH$, i.e.,
$$ \norm{v}^2= \sum_j |\inpro{v_j,v}|^2, \qquad\forall v\in\cH. $$
\item The frame operator $S=VV^*=I$, i.e., we have the frame expansion
$$ v = \sum_j v_j \inpro{v_j,v}, \qquad\forall v\in\cH. $$
\item The Plancherel identity
$$ \inpro{v,w} = \sum_j \inpro{v,v_j} \inpro{v_j,w}, \qquad\forall v,w\in\cH. $$
\item The Gramian $P=V^*V$ is a rank $d$ orthogonal projection, i.e., $P^2=P$, $P^*=P$.
\end{enumerate}
\end{proposition}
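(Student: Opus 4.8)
The plan is to funnel all four statements through the single operator identity $S = VV^* = I$, so that the real work is (a) translating each condition into a statement about $S$, and (b) upgrading a diagonal (quadratic) condition to a full operator identity. First I would record the computations that follow directly from the synthesis and analysis maps: since $V^*v = (\inpro{v,v_j})_{j=1}^n$, we have $\sum_j|\inpro{v,v_j}|^2 = \norm{V^*v}^2 = \inpro{VV^*v,v} = \inpro{Sv,v}$; likewise $\sum_j v_j\inpro{v,v_j} = VV^*v = Sv$; and, pulling the scalar $\inpro{v,v_j}$ out on the right by linearity in the first slot, $\sum_j\inpro{v_j,w}\inpro{v,v_j} = \inpro{VV^*v,w} = \inpro{Sv,w}$. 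Note too that $S = VV^*$ is self-adjoint and positive semidefinite.

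With these identities in hand, condition (ii) is literally $Sv = v$ for all $v$, i.e. $S = I$, while (iii) reads $\inpro{Sv,w} = \inpro{v,w}$ for all $v,w$, which is equivalent to $S = I$ by nondegeneracy of the inner product (take $w = (S-I)v$). Thus (ii) $\Longleftrightarrow$ (iii) at once, and (ii) trivially implies (i). The one genuinely nontrivial implication among (i)--(iii) is (i) $\Longrightarrow$ (ii). Condition (i) says $\inpro{(S-I)v,v} = 0$ for all $v$. The form $B(v,w) := \inpro{(S-I)v,w}$ is linear in the first variable and conjugate symmetric (because $S-I$ is self-adjoint, so $\overline{B(w,v)} = \inpro{v,(S-I)w} = \inpro{(S-I)v,w} = B(v,w)$), and its diagonal vanishes. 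Here I would invoke the polarisation identity (Lemma \ref{polarisationidlemma}), whose proof uses only linearity and conjugate symmetry and so applies verbatim to $B$: it recovers $B(v,w)$ from the quantities $B(v\pm wi_r,\,v\pm wi_r)$, all of which are zero, forcing $B\equiv 0$ and hence $S = I$.

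I expect this last step to be the crux, and it is the reason the polarisation identity was established first: over $\RR$ a vanishing quadratic form need not arise from the zero operator, but self-adjointness of $S-I$ together with polarisation rescues the conclusion uniformly over $\RR,\CC,\HH$.

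Finally, for (ii) $\Longleftrightarrow$ (iv) I would argue with ranks of the Gram-type products, deliberately avoiding traces (since $\trace(AB)\ne\trace(BA)$ over $\HH$ in general). The key supporting fact is $\ker(V^*V) = \ker V$: if $V^*Vx = 0$ then $\norm{Vx}^2 = \inpro{V^*Vx,x} = 0$, so $Vx = 0$, and the reverse inclusion is immediate; hence $\rank(V^*V) = \rank V = \rank(VV^*)$ by rank--nullity for right $\HH$-vector spaces. For (ii) $\Longrightarrow$ (iv): $S = I$ gives $P^2 = V^*(VV^*)V = V^*V = P$ and $P^* = (V^*V)^* = P$, while $\rank P = \rank(V^*V) = \rank(VV^*) = \rank I = d$. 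For (iv) $\Longrightarrow$ (ii): from $P^2 = P$ we get $V^*(S-I)V = V^*SV - V^*V = P^2 - P = 0$, and $\rank V = \rank(V^*V) = \rank P = d = \dim\cH$ forces $V$ to be surjective; writing any $u,w\in\cH$ as $u = Vx$, $w = Vy$ and using $V^*(S-I)V = 0$ then yields $\inpro{(S-I)u,w} = \inpro{(S-I)Vx,Vy} = \inpro{V^*(S-I)Vx,y} = 0$ for all $u,w$, giving $S = I$.
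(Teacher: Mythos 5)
Your proof is correct, and for the one genuinely hard implication it uses the same key tool as the paper, though packaged differently; the treatment of (iv) is genuinely different. For (i)$\implies$(ii), the paper expands the frame condition at the vectors $v\pm wi_r$ and recombines it, via $\overline{i_r}q+\overline{q}i_r=2q_r$, into the Plancherel identity (iii), then invokes Riesz; you instead note that the proof of Lemma \ref{polarisationidlemma} uses only linearity and conjugate symmetry (never positivity), so polarisation applies verbatim to the Hermitian form $B(v,w)=\inpro{(S-I)v,w}$, whose vanishing diagonal forces $B\equiv 0$. Same mechanism, cleaner and more reusable packaging. The real divergence is the equivalence with (iv): the paper reads $P^2=P$ entrywise as the Plancherel identity at pairs of frame vectors, extends by sesquilinearity, and computes the rank from $\rank(P)=\trace(P)=\Re(\trace(VV^*))=d$, which relies on the spectral theory of Hermitian quaternionic matrices and the identity $\Re(\trace(AB))=\Re(\trace(BA))$; you avoid traces entirely, using $\ker(V^*V)=\ker V$, rank--nullity, and surjectivity of $V$ to pass from $V^*(S-I)V=0$ to $S=I$. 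Your route buys independence from the trace subtleties over $\HH$ (which the paper must handle with care, since $\trace(AB)\ne\trace(BA)$ there); the paper's route makes the entrywise meaning of $P^2=P$ explicit, which it reuses later (e.g.\ in the complementary tight frame construction). One step to tighten: the chain $\rank(V^*V)=\rank V=\rank(VV^*)$ is not pure rank--nullity, because $\rank(A)=\rank(A^*)$ is not immediate over $\HH$; either argue $\rank V=\rank(V^*V)\le\rank(V^*)=\rank(VV^*)\le\rank V$, using the kernel identity for both $V$ and $V^*$ together with the range inclusions $\ran(V^*V)\subseteq\ran(V^*)$ and $\ran(VV^*)\subseteq\ran(V)$, or simply note that in the direction (ii)$\implies$(iv) the identity $VV^*=I$ already makes $V$ surjective, so $\rank V=d$ directly.
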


\begin{proof} The implications (ii)$\implies$(iii)$\implies$(i) follow by taking
the inner product with $w$ and then letting $w=v$, respectively. Suppose that (i) holds.
	By Lemma \ref{polarisationidlemma} and (\ref{qrpart}),
we have
\begin{align*}
4(\inpro{v,w})_r
&= \norm{vi_r+w}^2-\norm{vi_r-w}^2
= \sum_j\Bigl( |\inpro{v_j,vi_r+w}|^2 -|\inpro{v_j,vi_r-w}|^2\Bigr) \cr
&= \sum_j \Bigl( 2\overline{i_r}\inpro{v,v_j}\inpro{v_j,w}
+2\inpro{w,v_j}\inpro{v_j,v}i_r\Bigr)
= 4 \sum_j( \inpro{v,v_j} \inpro{v_j,w}
	)_r.
\end{align*}
Thus (by the Riesz representation, or since the orthogonal complement of $\Hd$ is $\{0\}$)
$$ \inpro{v,w}
	=\sum_j \overline{\inpro{v_j,v}} \inpro{v_j,w}
= \inpro{ \sum_j v_j \inpro{v_j,v} ,w} \Implies
 v = \sum_j v_j \inpro{v_j,v}, $$
which is (ii).

We now show
 (iii)$\iff$(iv).
We observe that by construction $P=(\inpro{v_j,v_k})_{j,k}$ is Hermitian. 
The condition $P^2=P$ can be written entrywise as
$$ \inpro{v_j,v_k}=P_{jk}=\sum_\ell P_{j\ell}P_{\ell k}
=\sum_\ell \inpro{v_j,v_\ell}\inpro{v_\ell,v_k}, $$
which is the Plancherel identity for $v=v_j$ and $w=v_k$. The implications
then follow by extending the Plancherel identity (using linearity and symmetry
of the inner product), and
calculating
$\rank(P)=\trace(P)=\Re(\trace(VV^*))=d$, by (ii).
\end{proof}


For ease of presentation, we will now consider $\Hd$,
rather than saying let $\cH$ be a quaternionic Hilbert space
of dimension $d$. 
We also write $\Fd$, with $\FF=\RR,\CC,\HH$.
The following characterisation extends the real and complex cases (see \cite{W18}
Theorem 2.1).

\begin{proposition} An $n\times n$ matrix $P$ is the Gramian matrix of a normalised tight
frame $V=[v_1,\ldots,v_n]$ for $\Hd$ if and only if it is an orthogonal 
projection matrix of rank $d$.
\end{proposition}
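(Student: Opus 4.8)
The plan is to realise that one direction is already in hand and to establish the other by an explicit factorisation through $\Hd$. First I would record the forward implication, which is immediate: if $P$ is the Gramian of a normalised tight frame $V=[v_1,\ldots,v_n]$ for $\Hd$, then by Proposition \ref{tightframeequidefs} (the implication (i)$\implies$(iv)) the matrix $P=V^*V$ is an orthogonal projection matrix of rank $d$. So the entire content lies in the converse.

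For the converse, suppose $P$ is an $n\times n$ orthogonal projection matrix of rank $d$, i.e.\ $P^2=P$, $P^*=P$, and $\rank(P)=d$. The idea is to factor $P$ as $BB^*$ with $B^*B=I_d$. Since $P$ is a rank $d$ orthogonal projection, its range $W=\ran(P)\subseteq\Hn$ is a $d$-dimensional subspace; applying the Gram--Schmidt process (which, as noted above, extends to quaternionic inner product spaces) to any basis of $W$ produces an orthonormal basis $b_1,\ldots,b_d$ of $W$. I would set $B=[b_1,\ldots,b_d]\in\HH^{n\times d}$, so that $B^*B=I_d$ by orthonormality. Because $P$ is the orthogonal projection onto $W$, it can be written as $P=\sum_{r=1}^d b_r b_r^*=BB^*$: one checks that both sides act as the identity on $W$ and as zero on $W^\perp$, using $(b_r b_r^*)v=b_r\inpro{v,b_r}$ together with $\inpro{b_s,b_r}=\gd_{sr}$ and the placement of scalars $\inpro{b_s c_s,b_r}=\gd_{sr}c_s$.

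Now I would put $V:=B^*\in\HH^{d\times n}$, viewed as the synthesis map of its $n$ columns $v_1,\ldots,v_n\in\Hd$. Then $VV^*=B^*B=I_d$, so by Proposition \ref{tightframeequidefs} (condition (ii)) the sequence $V=[v_1,\ldots,v_n]$ is a normalised tight frame for $\Hd$; and its Gramian is $V^*V=BB^*=P$, which is exactly what is required.

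The hard part will be the factorisation $P=BB^*$ with $B^*B=I_d$, since this is the step where the quaternionic structure must be handled honestly: one must know that an orthogonal projection over $\HH$ genuinely has a $d$-dimensional range carrying an orthonormal basis. This rests on the fact, asserted earlier, that Gram--Schmidt carries over verbatim to $\HH$-inner product spaces despite non-commutativity (equivalently, that $P$ admits a spectral description $P=U\diag(I_d,0)U^*$ for a unitary $U$, whose first $d$ columns give the $b_r$). The algebraic identities $(AB)^*=B^*A^*$ and $(A^*)^*=A$ recorded above are precisely what make the two computations $VV^*=I_d$ and $V^*V=P$ go through with no reversal-of-order issues.
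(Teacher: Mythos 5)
Your proposal is correct, but it takes a different route from the paper. The paper's converse is a one-line construction: it sets $v_j:=Pe_j$ (the columns of $P$ itself, viewed as vectors in $\Hn$) and computes, using $P^*=P$ and $P^2=P$, that
$$ \inpro{v_k,v_j}=\inpro{Pe_k,Pe_j}=\inpro{e_k,Pe_j}=P_{jk}, $$
so the Gramian of $(v_j)$ is $P$; then Proposition \ref{tightframeequidefs} ((iv)$\implies$(i)) shows $(v_j)$ is a normalised tight frame \emph{for its $d$-dimensional span}, which is implicitly identified with $\Hd$ by a unitary. You instead factor $P=BB^*$ with $B^*B=I_d$, taking for the columns of $B$ an orthonormal basis of $\ran(P)$ produced by quaternionic Gram--Schmidt, and then set $V:=B^*$, so that $VV^*=I_d$ gives tightness via condition (ii) and $V^*V=BB^*=P$ recovers the Gramian. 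What your factorisation buys is a frame whose vectors genuinely live in $\Hd$, with no identification of a subspace of $\Hn$ with $\Hd$ needed, and it isolates exactly where the quaternionic structure enters (the existence of an orthonormal basis of $\ran(P)$, which the paper has already asserted carries over). What the paper's approach buys is brevity and explicitness: the frame vectors are just the columns of $P$, and no factorisation or basis construction is required. Your handling of the scalar placements ($\inpro{b_s c_s,b_r}=\gd_{sr}c_s$, linearity in the first variable) is exactly the care the noncommutative setting demands, so the argument goes through as written.
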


\begin{proof}
We have already seen that a normalised tight frame is determined by its Gramian,
which is an orthogonal projection of rank $d$ (Proposition \ref{tightframeequidefs}).
It remains only to show that such a matrix $P$ corresponds to a normalised tight 
frame. Let $v_j=Pe_j$. Then with the Euclidean norm on $\Hn$, we have that
$$ \inpro{v_j,v_k} = \inpro{Pe_j,Pe_k} =\inpro{e_j,Pe_k}=P_{jk}, $$
so that $(v_j)$ is such a tight frame (for its $d$-dimensional span).
\end{proof}

A finite sequence of unit vectors $(v_j)$ (or the lines they represent) are said 
to be {\bf equiangular} if 
\begin{equation}
\label{equiangulardefn}
	|\inpro{v_j,v_k}|^2=\gl=c^2=(\cos\gth)^2, \qquad\forall j\ne k.
\end{equation}
The constants $\gl$, $c$ and $\gth$ all occur in the literature, 
and are called the (common) angle. 


\begin{example}
\label{HoggarlinesinC^2}
Four equiangular lines in $\HH^2$ 
with $\gl={1\over3}$
are given in \cite{H76}, 
namely
\begin{align*}
w_1 &= {1\over\sqrt{2}}\pmat{1\cr j},\quad 
w_2 = {1\over\sqrt{6}}\pmat{1-\sqrt{2}i\cr j-\sqrt{2}k}, \cr
w_3 &= {1\over2\sqrt{3}} \pmat{\sqrt{2}+\sqrt{3}+i\cr\sqrt{2}j-\sqrt{3}j+k}, \quad
w_4 = {1\over2\sqrt{3}} \pmat{\sqrt{2}-\sqrt{3}+i\cr\sqrt{2}j+\sqrt{3}j+k}.
\end{align*}
The Gramian of these vectors (which are a tight frame for $\HH^2$) 
has only complex entries, and so they are unitarily equivalent 
to an equiangular tight frame for $\CC^2$.
They have the same Gramian as the Weyl-Heisenberg SIC 
$v_1=v$, $v_2=Sv$, $v_3=\gO v$, $v_4=iS\gO v$, where
$$ v={1\over\sqrt{6}}\pmat{ \sqrt{3+\sqrt{3}}\cr 
{1\over\sqrt{2}}(1+i)
\sqrt{3-\sqrt{3}} },  \quad
S=\pmat{0&1\cr1&0}, \ \gO=\pmat{1&0\cr0&-1}. $$
Therefore, there is a unitary map $U$ with $v_j=Uw_j$, which we calculate as
$$ U=\pmat{z_1&-jz_1\cr z_2&-kz_2},
\quad
z_1:=
{\sqrt{3+\sqrt{3}}\over2\sqrt{3}}
+{\sqrt{3-\sqrt{3}}\over2\sqrt{3}}i, \quad 
z_2:={\sqrt{3+\sqrt{6}}\over2\sqrt{3}}
-{\sqrt{3-\sqrt{6}}\over2\sqrt{3}}i.
$$
\end{example}

Though this first example of quaternionic equiangular lines are not
``quaternionic'', we will see that such lines do exist, and they are very intriguing.

\subsection{The variational characterisation of tight frames}

We now seek to extend the variational characterisation 
for tight frames \cite{BF03}, \cite{W03}.
For $\Cd$, this is most easily and transparently
proved 
from 
the spectral decomposition of the frame operator using 
$\trace(AB)=\trace(BA)$ (see \cite{W18}, Theorem 6.1).
This trace formula no longer holds over the quaternions, even for $1\times 1$ matrices. 
Instead, we will use the fact 
\begin{equation}
\label{Htraceformulareal}
\Re(\trace(AB))=\Re(\trace(BA)),
\end{equation}
which follows from the special case 
$\Re(ab)=\Re(ba)$, $\forall a,b\in\HH$. 

The general spectral theory of matrices over $\HH$ is fraught (see \cite{R14}), since
$$ A v = v\gl \Implies A(v\ga)=(v\ga) \ga^{-1}\gl\ga, $$
so that if $v$ is a (right) eigenvector for $\gl$, then $v\ga$ is an eigenvector for eigenvalue $\ga^{-1}\gl\ga$.
However, {\bf Hermitian matrices} (those with $A^*=A$), have real eigenvalues and 
are unitarily diagonalisable, as in the complex case.


\begin{lemma} Let $V=[v_1,\ldots,v_n]$ be 
vectors in $\Fd$, with 
frame operator $S=VV^*$ and Gramian $G=V^*V$. Then $\trace(S^k)=\trace(G^k)$,
$k=1,2,\ldots$. In particular,
\begin{equation}
\label{traceSandSsquared}
\trace(S) 
= \sum_{j} \norm{v_j}^2, 
\qquad \trace(S^2)  
= \sum_{j}\sum_{k} |\inpro{v_j,v_k}|^2.
\end{equation}
\end{lemma}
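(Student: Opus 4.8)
The plan is to exploit the real trace formula (\ref{Htraceformulareal}) together with the Hermitian symmetry of $S$ and $G$. The naive route -- invoking $\trace(AB)=\trace(BA)$ to slide the factors of $V$ and $V^*$ past one another -- is precisely what fails over $\HH$, so the argument must be arranged so that only \emph{real parts} of traces are ever compared, with the reality of the relevant traces recovered by a separate step.

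First I would record the associativity identities
$$ S^k=(VV^*)^k=V(V^*V)^{k-1}V^*, \qquad G^k=(V^*V)^k=(V^*V)^{k-1}(V^*V), $$
which hold because matrix multiplication over $\HH$ is associative. Setting $A:=V$ and $B:=(V^*V)^{k-1}V^*$ (of sizes $d\times n$ and $n\times d$, so that both $AB$ and $BA$ are square), these read $AB=S^k$ and $BA=G^k$. Applying (\ref{Htraceformulareal}) entrywise -- which amounts to $\Re(a_{j\ell}b_{\ell j})=\Re(b_{\ell j}a_{j\ell})$ summed over $j,\ell$ -- then yields $\Re(\trace(S^k))=\Re(\trace(G^k))$ for every $k\ge1$.

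The key step, and the one needing genuine care, is upgrading this equality of real parts to equality of the traces themselves. Since $S^*=(VV^*)^*=VV^*=S$ and likewise $G^*=G$, both $S$ and $G$ are Hermitian, and using $(XY)^*=Y^*X^*$ one sees by induction that $S^k$ and $G^k$ are Hermitian as well. A Hermitian matrix has real diagonal entries (as $X_{jj}=\overline{X_{jj}}$), so $\trace(S^k)$ and $\trace(G^k)$ are real; hence
$$ \trace(S^k)=\Re(\trace(S^k))=\Re(\trace(G^k))=\trace(G^k). $$
This reality argument is the main obstacle: without it the real-trace identity is strictly weaker than the claimed equality, and it is exactly the Hermitian symmetry of the powers $S^k,G^k$ that closes the gap left by the failure of the ordinary trace formula.

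Finally the two displayed special cases follow by reading off diagonal entries of $G=V^*V$, whose $(j,k)$ entry is $\inpro{v_k,v_j}$. For $k=1$ this gives $\trace(S)=\trace(G)=\sum_j\inpro{v_j,v_j}=\sum_j\norm{v_j}^2$. For $k=2$ we compute $\trace(S^2)=\trace(G^2)=\sum_j\sum_k G_{jk}G_{kj}$; since conjugate symmetry gives $G_{kj}=\inpro{v_j,v_k}=\overline{\inpro{v_k,v_j}}=\overline{G_{jk}}$, each summand is $G_{jk}\overline{G_{jk}}=|G_{jk}|^2=|\inpro{v_j,v_k}|^2$, which is (\ref{traceSandSsquared}).
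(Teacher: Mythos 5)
Your proposal is correct and follows essentially the same route as the paper: both use the scalar identity $\Re(ab)=\Re(ba)$ to cyclically move a factor of $V$ in $\Re(\trace(\cdot))$, and both close the gap by observing that $S^k$ and $G^k$ are Hermitian and hence have real trace. The only differences are cosmetic (your factorisation $S^k=V(V^*V)^{k-1}V^*$ versus the paper's $VV^*(VV^*)^{k-1}$, and your diagonal-entry argument for reality versus the paper's quadratic-form argument).
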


\begin{proof} The trace of an Hermitian matrix $A$ is real,
since $\overline{\inpro{Ax,x}}=\inpro{x,Ax}=\inpro{Ax,x}$.
Since $S^k$ and $G^k$ are Hermitian, they have real trace, and so
by (\ref{Htraceformulareal}), we have
\begin{align*}
\trace(S^k) 
&= \Re(\trace(VV^*(VV^*)^{k-1}))
= \Re(\trace(V^*(VV^*)^{k-1}V)) \cr
&= \Re(\trace((V^*V)^k))
= \trace(G^k).
\end{align*}
The formulas for $\trace(G)$ and $\trace(G^2)$ given on the 
left hand side of 
(\ref{traceSandSsquared}) 
are easily calculated
from $(G)_{jk}=\inpro{v_j,v_k}$. 
\end{proof}

\begin{theorem}
\label{generalisedWelchbound}
(Variational characterisation)
Let $v_1,\ldots,v_n$ be vectors in $\Fd$,
which are 
not all zero.  
Then
\begin{equation}
\label{genWelchbd}
\sum_{j=1}^n \sum_{k=1}^n |\inpro{v_j,v_k}|^2 \ge {1\over d}
\Bigl(\sum_{j=1}^n \norm{v_j}^2\Bigr)^2,
\end{equation}
with equality if and only if $(v_j)_{j=1}^n$ is a tight frame for $\Fd$.
\end{theorem}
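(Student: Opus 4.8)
The plan is to recognise both sides of (\ref{genWelchbd}) as traces of the frame operator $S=VV^*$ and then reduce everything to a single positivity statement. By the preceding Lemma, and in particular (\ref{traceSandSsquared}), the left-hand side of (\ref{genWelchbd}) is exactly $\trace(S^2)$, while $\sum_j\norm{v_j}^2=\trace(S)$. Writing $t:=\trace(S)$, which is real (as $S$ is Hermitian) and strictly positive (the $v_j$ are not all zero), the assertion (\ref{genWelchbd}) becomes the operator inequality
$$ \trace(S^2)\ge {1\over d}\,t^2. $$

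First I would exploit that real scalars are central in $\HH$, so $t/d$ commutes with every matrix, and introduce the Hermitian matrix $H:=S-{t\over d}I$. Expanding and using $\trace(I)=d$, $\trace(S)=t$ gives
$$ \trace(H^2)=\trace(S^2)-{2t\over d}\trace(S)+{t^2\over d^2}\trace(I)=\trace(S^2)-{t^2\over d}, $$
so the desired inequality is equivalent to $\trace(H^2)\ge 0$.

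The key step, and the place where quaternionic care is required, is establishing $\trace(H^2)\ge 0$ without the cyclic property of the trace (which fails over $\HH$) and without a direct eigenvalue computation. Since $H$ is Hermitian, $H^2=H^*H$, and hence, by the definition of the Frobenius norm,
$$ \trace(H^2)=\trace(H^*H)=\norm{H}_F^2=\sum_j\sum_k|H_{jk}|^2\ge 0. $$
This identity is manifestly real and nonnegative, which is exactly why I expect it to be the cleanest route: it sidesteps the failure of $\trace(AB)=\trace(BA)$ over $\HH$. An alternative would be to unitarily diagonalise the Hermitian $S$ and apply Cauchy--Schwarz to its real nonnegative eigenvalues, but that forces one to track the $\Re(\trace(\cdot))$ subtlety of (\ref{Htraceformulareal}), so I would avoid it.

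Finally I would settle the equality case. Equality holds if and only if $\norm{H}_F=0$, i.e. $H=0$, which is precisely $S={t\over d}I=:cI$ with $c=t/d>0$. Via Proposition \ref{tightframeequidefs} (using the polarisation identity of Lemma \ref{polarisationidlemma} to recover $S=cI$ from $\inpro{Sv,v}=c\norm{v}^2$), the condition $S=cI$ with $c>0$ is exactly the statement that $(v_j)$ is a tight frame for $\Fd$. Conversely a tight frame has $S=cI$, whence $t=cd$ and $H=cI-cI=0$, giving equality; this closes both directions.
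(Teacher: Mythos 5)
Your proof is correct, but it takes a genuinely different route from the paper's. The paper proves the inequality by unitarily diagonalising the Hermitian frame operator $S=U\gL U^*$ (invoking the quaternionic spectral theorem for Hermitian matrices), using the real-trace cyclicity $\Re(\trace(AB))=\Re(\trace(BA))$ a second time to justify $\trace(S^k)=\trace(\gL^k)$, and then applying Cauchy--Schwarz to the vector of real nonnegative eigenvalues, with equality forcing all eigenvalues to coincide. You instead avoid the spectral theorem altogether: after the common first step of identifying both sides via (\ref{traceSandSsquared}) as $\trace(S^2)$ and $\trace(S)^2$, you form the Hermitian matrix $H=S-\tfrac{t}{d}I$ (legitimate because the real scalar $t/d$ is central in $\HH$) and observe that $\trace(H^2)=\trace(H^*H)=\norm{H}_F^2\ge0$ holds entrywise, with equality exactly when $H=0$, i.e.\ $S=\tfrac{t}{d}I$. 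What each approach buys: the paper's argument displays how the classical real/complex proof transfers once one has quaternionic diagonalisability of Hermitian matrices, but it must invoke (\ref{Htraceformulareal}) inside the proof; your ``variance trick'' makes the positivity and the equality case manifest without any eigenvalue analysis, so the only place noncommutativity is handled is in the Lemma you both cite (whose own proof uses (\ref{Htraceformulareal})). Your treatment of the equality case via Proposition \ref{tightframeequidefs} matches the paper's implicit use of $S=AI\Longleftrightarrow$ tight frame, and your observation that $t>0$ rules out the degenerate constant mirrors the paper's closing remark.
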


\begin{proof}
Let $V=[v_j]$. 
Since $S=VV^*$ is positive definite,
it is unitarily diagonalisable $S=U\gL U^*$, $\gL=\diag(\gl_j)$,
with real 
eigenvalues $\gl_1,\ldots,\gl_d\ge0$.
From (\ref{Htraceformulareal}), 
we have 
$$\trace(S^k)
=\Re(\trace(U\gL^k U^*))
=\Re(\trace(\gL^k U^*U))
=\Re(\trace(\gL^k))
=\trace(\gL^k). $$
Thus, the Cauchy-Schwarz inequality gives
$$ \trace(S)^2 = (\sum_j\gl_j)^2
= \inpro{(1),(\gl_j)}^2
\le \norm{(1)}^2\norm{(\gl_j)}^2
= d \sum_j \gl_j^2 = d\trace(S^2),$$
which, 
by (\ref{traceSandSsquared}),
is (\ref{genWelchbd}),
with equality if and only if $\gl_j=A$, $\forall j$, $A>0$, i.e.,
$$ S=U(AI)U^*=AI  \Iff \hbox{$(v_j)$ is a tight frame for $\Fd$.}$$
Note above, since one vector is nonzero, $S=\sum_j v_jv_j^*\ne0$, and so $A\ne0$.
\end{proof}

This variational characterisation of tight frames depends only on the 
Gramian, and hence the frame up to unitary equivalence. It is easy to
verify, and plays a key role in Theorems \ref{tightframesRtoC}
and \ref{tightframesCtoH}. We now consider its implications for
equiangular lines.


\subsection{Bounds on equiangular lines}

We recall that unit vectors $(v_j)$ in $\Fd$ are
equiangular if they satisfy (\ref{equiangulardefn}), i.e.,
$$ |\inpro{v_j,v_k}|^2=\gl=c^2=(\cos\gth)^2,
\qquad\forall j\ne k. $$
Those of the most interest have the maximum separation of the corresponding
lines, i.e., $\gl=c^2$ {\em small}, or, equivalently, $0\le\gth\le{\pi\over2}$ {\em large}.
Examples that exist in every dimension $d$ are orthonormal bases
of $n=d$ vectors ($\gl=0$, $\gth=90^\circ$) and the $n=d+1$ vertices
of a regular simplex ($\gl={1\over d^2}$). 
The formula for the {\em chordal distance}
$$ \rho(v_j,v_k) := \sqrt{1-|\inpro{v_j,v_k}|^2}, $$
gives a metric on the lines in $\Hd$, and accordingly,
\cite{CKM16} calls sets of (tight) equiangular lines ``(tight) simplices in projective space''
(points an equal distance from each other).

As an example of Theorem
\ref{generalisedWelchbound}, we have the following bound.

\begin{example}
If all the $n$ vectors $(v_j)$ in $\Fd$ have unit norm, then
(\ref{genWelchbd})
reduces to
$$\sum_{j=1}^n \sum_{k=1}^n |\inpro{v_j,v_k}|^2 \ge {1\over d}
\Bigl(\sum_{j=1}^n 1^2\Bigr)^2={n^2\over d}. $$
Moreover, if the $(v_j)$ are equiangular, then the left hand side is
$(n^2-n)\gl+n$, and the inequality rearranges to
\begin{equation}
\label{lamdalinesboung}
\gl \ge {n-d\over d(n-1)}, 
\end{equation}
with equality (and maximum possible separation) when the vectors are a tight frame,
and for $\gl<{1\over d}$ it rearranges to the {\bf relative bound} for 
equiangular lines
$$ n \le {1-\gl\over {1\over d}-\gl}, \qquad \gl<{1\over d}. $$
\end{example}

The next bound (which is well known for $\FF=\RR,\CC$) depends
on the underlying field.


\begin{theorem}
\label{linearindepth}
Suppose $d>1$. Let $(v_j)$ be a sequence of $n$ non-parallel unit vectors in $\Fd$
giving a set of $n$ equiangular lines, then the orthogonal projections
$$ P_j =v_j v_j^* : v\mapsto v_j\inpro{v_j,v}, \qquad j=1,\ldots,n, $$
are linearly independent over $\RR$, and hence
\begin{equation}
\label{nequiangbound}
n \le 
\begin{cases}
 {1\over2}d(d+1), & \FF=\RR; \\
 d^2, & \FF=\CC; \\
 2d^2-d, & \FF=\HH,
\end{cases}
\end{equation}
with equality if and only if $(P_j)$ is a basis for the
$\RR$-vector space of Hermitian matrices. In these cases, the angle is 
\begin{equation}
\label{lambdamaxight}
\gl =
\begin{cases}
 {1\over d+2}, & \FF=\RR; \\
 {1\over d+1}, & \FF=\CC; \\
 {1\over d+{1\over2}}, & \FF=\HH.
\end{cases}
\end{equation}
\end{theorem}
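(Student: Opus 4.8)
The plan is to regard each rank-one orthogonal projection $P_j=v_jv_j^*$ as an element of the real vector space $\cM$ of $d\times d$ Hermitian matrices over $\FF$, and to equip $\cM$ with the real inner product $\inpro{A,B}_\RR:=\Re\trace(AB)$. This is a genuine positive-definite real inner product on $\cM$: it is symmetric by the real trace identity (\ref{Htraceformulareal}), and for Hermitian $A$ one has $\inpro{A,A}_\RR=\Re\trace(A^*A)=\norm{A}_F^2$, which is real and vanishes only for $A=0$. The whole argument is then a dimension count in $\cM$, whose real dimension is $\frac12 d(d+1)$, $d^2$, $2d^2-d$ for $\FF=\RR,\CC,\HH$: the diagonal contributes $d$ real parameters and each of the $\binom d2$ super-diagonal entries contributes $\dim_\RR(\FF)$ of them.

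First I would compute the Gram matrix of $(P_j)$ in this inner product. Using that the real part of the trace is cyclic, $\Re\trace(XY)=\Re\trace(YX)$ from (\ref{Htraceformulareal}), one moves the leading $d\times1$ factor to the back and collapses the product to a scalar:
\[
\inpro{P_j,P_k}_\RR=\Re\trace(v_jv_j^*v_kv_k^*)=\Re\bigl(v_j^*v_kv_k^*v_j\bigr)=\Re\bigl(\inpro{v_k,v_j}\inpro{v_j,v_k}\bigr)=|\inpro{v_j,v_k}|^2 .
\]
Since the $v_j$ are unit vectors and equiangular, this Gram matrix is $M=(1-\gl)I_n+\gl J_n$, with $I_n$ the identity and $J_n$ the all-ones matrix; its eigenvalues are $1+(n-1)\gl$ (once) and $1-\gl$ (with multiplicity $n-1$). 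Because the $n$ lines are distinct, the vectors $v_j,v_k$ ($j\ne k$) are not dependent, so Cauchy--Schwarz is strict and $\gl=|\inpro{v_j,v_k}|^2<1$; hence both eigenvalues are positive, $M$ is positive definite, and therefore $(P_j)$ is $\RR$-linearly independent. Independence in $\cM$ immediately yields $n\le\dim_\RR\cM$, which is exactly (\ref{nequiangbound}), with equality precisely when $(P_j)$ is a basis of $\cM$.

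It remains to obtain the angle in the equality case. When $(P_j)$ is a basis of $\cM$, the identity $I\in\cM$ may be written $I=\sum_j a_jP_j$ with $a_j\in\RR$. Pairing with $P_k$ and using $\inpro{I,P_k}_\RR=\Re\trace(P_k)=\norm{v_k}^2=1$ gives
\[
1=a_k(1-\gl)+\gl\sum_j a_j=a_k(1-\gl)+\gl\Bigl(\sum_j a_j\Bigr),\qquad k=1,\dots,n,
\]
so every $a_k$ equals a common value $a$; pairing $I=a\sum_jP_j$ with $I$ (using $\inpro{I,I}_\RR=\Re\trace(I)=d$) gives $na=d$. Eliminating $a$ produces $\gl=(n-d)/(d(n-1))$, the equality case of the relative bound (\ref{lamdalinesboung}). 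Substituting $n=\frac12 d(d+1),\,d^2,\,2d^2-d$ and cancelling the shared factor $d-1$ yields $\gl=\frac1{d+2},\ \frac1{d+1},\ \frac1{d+\frac12}$, as claimed in (\ref{lambdamaxight}).

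The main obstacle is purely the noncommutative bookkeeping: over $\HH$ the trace is not cyclic and the Frobenius form $\trace(B^*A)$ is not in general real, so one must work with $\Re\trace$ throughout and verify that it is a symmetric, positive-definite real bilinear form on $\cM$ (this is exactly where (\ref{Htraceformulareal}) and $\Re(ab)=\Re(ba)$ are needed). Once the Gram-matrix identity $\inpro{P_j,P_k}_\RR=|\inpro{v_j,v_k}|^2$ is justified in this setting, the remainder is the linear-algebra count above, which is identical in spirit to the real and complex cases, the only field-dependent input being $\dim_\RR\cM$.
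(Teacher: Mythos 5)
Your proposal is correct and follows essentially the same route as the paper: both arguments place the $P_j$ in the real vector space of Hermitian matrices equipped with the form $(A,B)\mapsto\Re(\trace(AB))$, use (\ref{Htraceformulareal}) to compute $\Re(\trace(P_jP_k))=|\inpro{v_j,v_k}|^2$, deduce $\RR$-linear independence from positive definiteness of the resulting quadratic form (the paper expands $\norm{\sum_j c_jP_j}_F^2=\gl\bigl(\sum_j c_j\bigr)^2+(1-\gl)\sum_j c_j^2$ directly, which is exactly your eigenvalue computation for the Gram matrix $(1-\gl)I_n+\gl J_n$), and conclude with the same dimension count of Hermitian matrices over $\FF$.

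The one place you go beyond the paper is worth noting: the paper's proof stops after the dimension count and never actually derives the angle formula (\ref{lambdamaxight}); it is left to follow from the earlier relative-bound discussion. You supply this step explicitly and correctly, by expanding $I=\sum_j a_jP_j$ in the basis $(P_j)$, pairing with each $P_k$ to force all $a_k$ equal, pairing with $I$ to get $a=d/n$, and solving for $\gl=(n-d)/(d(n-1))$ before substituting the three values of $n$ from (\ref{nequiangbound}). This makes the equality statement self-contained, and also quietly establishes that a maximal equiangular set is automatically tight. A minor difference of no consequence: you justify $\gl<1$ via distinctness of the lines and strict Cauchy--Schwarz, whereas the paper attributes it to $d>1$; both are adequate.
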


\begin{proof} Since $d>1$, the equiangularity constant $\gl$ is less than $1$.
Using (\ref{Htraceformulareal}), we calculate
\begin{align*}
\Re(\trace(P_jP_k))
&=\Re(\trace(v_jv_j^*v_kv_k^*)) \cr
&=\Re(\trace(v_j^*v_kv_k^*v_j))
=|\inpro{v_j,v_k}|^2=\gl, \qquad j\ne k.
\end{align*}
The $\RR$-linear combination $\sum_j c_j P_j$ is Hermitian, and hence 
its Frobenius norm satisfies
\begin{align*}
\norm{\sum_j c_j P_j}_F^2
&= \Re(\trace(\sum_j c_j P_j \sum_k c_kP_k))
= \sum_j\sum_k c_j {c_k} \Re(\trace(P_jP_k)) \\
&= \sum_j\sum_k c_j {c_k} \gl + \sum_j c_j {c_j} (1-\gl)
= \gl \Bigl(\sum_j c_j\Bigr)^2 + (1-\gl) \sum_j c_j^2,
\end{align*}
which is zero only for the trivial linear combination.

The $n$ projections $\{P_j\}$ belong to
the real vector space of $d\times d$ Hermitian matrices
which has dimension 
given by the right hand side of (\ref{nequiangbound}).
For example, for $\FF=\HH$ the Hermitian matrices are 
determined by their real diagonal, and the entries above
it which can be any quaternions, giving a dimension
of  $d+{1\over2}(d^2-d)\cdot 4= 2d^2-d$.
\end{proof}

This result for $\Hd$, the inequality
(\ref{nequiangbound}), is given in \cite{H76b}, without proof,
and as Proposition 2.2 in \cite{CKM16} (which also includes
the octonionic case $\OO^3$).


We are now in a position to discuss quaternionic equiangular lines.
We first observe:
\begin{itemize}
\item Quaternionic equiangular lines do exist (for $\gl<1$, $d>1$). 
\end{itemize}
You will recall from Example \ref{HoggarlinesinC^2}
that Hoggar's example of four equiangular
lines in $\HH^2$ were in fact lines in $\CC^2$ (most likely
the very first occurrence of a SIC in the literature).
For $d=1$, any sequence of unit quaternions is an equiangular tight frame
(with $\gl=1$), which is quaternionic if any ratio of the quaternions
is not a complex number. Even though this is a trivial example, we will
be able to use such frames to construct unit-norm tight frames in $\CC^2$ and $\RR^4$
(Example \ref{quaternionsgroupframe}).
We now give a simple example in $\HH^2$.

\begin{example} (Five equiangular lines in $\HH^2$).
Fix $0<t<1$, and consider the four unit vectors
$$ v_r=\pmat{t\cr\sqrt{1-t^2}\, i_r}, \qquad i_1=1, \quad i_2=i, \quad i_3=j,\quad i_4=k. $$
These are equiangular, with
$$ |\inpro{v_r,v_s}|^2=\gl:=t^4+(1-t^2)^2, \qquad j\ne k, $$
where ${1\over2}\le\gl<1$. 
By Theorem \ref{linearindepth},
the maximal number of equiangular lines in 
$\CC^2$ is four, with $\gl={1\over 3}$, so these lines are quaternionic.
For the maximal separation $\gl={1\over2}$, we may add a fifth equiangular line,
to obtain five equiangular lines in $\HH^2$ given by
\begin{equation}
\label{fiveMUBlines}
{1\over\sqrt{2}}\pmat{1\cr 1}, \quad
{1\over\sqrt{2}}\pmat{1\cr i}, \quad
{1\over\sqrt{2}}\pmat{1\cr j}, \quad
{1\over\sqrt{2}}\pmat{1\cr k}, \quad 
\pmat{1\cr 0} \ \bigl(\hbox{or }
\pmat{0\cr 1}\bigr). 
\end{equation}
These lines are not tight, since they do not give equality in 
(\ref{lamdalinesboung}), i.e.,
$$ \gl={1\over 2}>{3\over 8}={5-2\over2(5-1)} = {n-d\over d(n-1)}. $$
They appear exactly as above in \cite{B20}, 
for the parameter choice
	$c={1\over\sqrt{2}}$, $\omega={\pi\over4}$, $\alpha=0$ and 
	$\gamma={\pi\over4}$.

Taking the five lines of (\ref{fiveMUBlines}) and their orthogonal complement gives
five MUBs (mutually unbiased bases) in $\HH^2$, which is a tight frame of
ten vectors (see \cite{BW25}).
\end{example}


Another method to obtain tight equiangular lines is via
the {\bf complementary tight frame}. The construction is as follows.
Let $G$ be the Gramian of $n>d$ equiangular unit vectors in $\Fd$ 
at an angle $\gl={n-d\over d(n-1)}\ne 0$, 
so that $P={d\over n} G$ is an orthogonal projection matrix
(Proposition \ref{tightframeequidefs}).
The complementary orthogonal projection $Q=I-P$ gives an equiangular tight frame 
of $n$ vectors for $\FF^{n-d}$ with Gramian $G_c$ given by
$$ 
G_c={n\over n-d}I-{d\over n-d}G, $$
and common angle $\gl_c={d^2\over(n-d)^2}\gl={d\over (n-d)(n-1)}$.
The equivalent construction for lines is called the {\em Gale dual} in \cite{CKM16} 
(see Corollary 2.12).

Let $c_d$ be the right hand side of (\ref{nequiangbound}),
which we can write as
$$ c_d = d+{1\over2}d(d-1)\cdot m, \qquad m:=\dim_\RR(\FF). $$
Since the complementary tight frame also must satisfy the bound (\ref{nequiangbound}),
for $n-d\ne1$,
we have that an equiangular tight frame of $n>d+1$ unit vectors in $\Fd$ must
satisfy
\begin{equation}
\label{complementbound}
n\le \min\{c_d,c_{n-d}\}. 
\end{equation}
This gives the following (see Theorem 2.18 of \cite{K08}).

\begin{proposition} 
\label{tightlinesboundofn}
Let $d\ge2$.
An equiangular tight frame of $n>d+1$ vectors for $\Fd$ satisfies
\begin{equation}
\label{lowerupperbound}
d+{1\over2}+{\sqrt{{8\over m}d+1}\over 2} \le n
\le d+{m\over2}d(d-1), \qquad m=\dim_\RR(\FF),
\end{equation}
so that
\begin{equation}
\label{nicelowerbound}
n\ge d+2+j, \qquad 
	\hbox{for }\quad d > {m\over2}(j+1)(j+2).
\end{equation}
\end{proposition}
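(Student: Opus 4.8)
The plan is to read both inequalities in (\ref{lowerupperbound}) directly off the two constraints $n\le c_d$ and $n\le c_{n-d}$ that together make up the complementary bound (\ref{complementbound}), and then to extract the clean form (\ref{nicelowerbound}) by a contrapositive argument. The upper bound requires no new work: it is exactly the statement $n\le c_d=d+{m\over2}d(d-1)$ coming from (\ref{nequiangbound}), valid for every equiangular tight frame for $\Fd$.

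For the lower bound I would use the second half of (\ref{complementbound}), namely $n\le c_{n-d}$, which is available precisely because $n>d+1$ forces the complementary frame to live in a space of dimension $n-d\ge2$. Writing $x:=n-d$ and substituting $c_{n-d}=x+{m\over2}x(x-1)$, the inequality $n\le c_{n-d}$ becomes $x+d\le x+{m\over2}x(x-1)$, i.e. the single clean relation
$$ d\le {m\over2}(n-d)(n-d-1). $$
Treating this as the quadratic inequality $x^2-x-{2d\over m}\ge0$ in $x=n-d$ and keeping the relevant (positive) root gives $x\ge{1\over2}+{1\over2}\sqrt{{8\over m}d+1}$, which is exactly the asserted lower bound on $n$.

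For (\ref{nicelowerbound}) I would argue by contradiction from the very same relation $d\le{m\over2}(n-d)(n-d-1)$. If $n<d+2+j$, then $n-d\le j+1$, and since $x\mapsto x(x-1)$ is increasing for $x\ge1$ we obtain $(n-d)(n-d-1)\le(j+1)j$, whence $d\le{m\over2}j(j+1)$, contradicting the hypothesis $d>{m\over2}j(j+1)$; hence $n\ge d+2+j$.

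I do not anticipate a serious obstacle: all three assertions collapse onto the single rearranged relation $d\le{m\over2}(n-d)(n-d-1)$, and the only point demanding care is that the lower bound genuinely needs $n>d+1$, so that the complementary-frame argument applies and $x=n-d\ge2$. For $n=d$ (an orthonormal basis) or $n=d+1$ (a simplex) the stated lower bound would fail, so the proposition should be read in the regime $n>d+1$ where (\ref{complementbound}) is in force; the edge value $j=0$ is then immediate since $n>d+1$ already gives $n\ge d+2$.
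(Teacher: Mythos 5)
Your proposal is correct, and for the bounds (\ref{lowerupperbound}) it is essentially the paper's own argument: the upper bound is just $n\le c_d$, and the lower bound comes from $n\le c_{n-d}$ rearranged into a quadratic inequality and solved by taking the relevant root — the paper writes the quadratic in $n$ (namely $n^2-(2d+1)n+d(d+1)-{2\over m}d\ge0$), while you write the equivalent quadratic $x^2-x-{2d\over m}\ge0$ in $x=n-d$; this is only a cosmetic difference, though your normalisation $d\le{m\over2}(n-d)(n-d-1)$ is arguably cleaner. Where you genuinely diverge is in the proof of (\ref{nicelowerbound}). The paper derives it by asking when the real-valued lower bound $d+{1\over2}+{1\over2}\sqrt{{8\over m}d+1}$ is at least $d+2+j$, which rearranges to the condition $d\ge{m\over2}(j+1)(j+2)$ — a condition that does \emph{not} match the stated hypothesis $d>{m\over2}j(j+1)$, so the paper's proof as written establishes the conclusion only under a more restrictive assumption. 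Your contrapositive argument, which uses the integrality of $n-d$ (if $n<d+2+j$ then $n-d\le j+1$, whence $d\le{m\over2}(n-d)(n-d-1)\le{m\over2}j(j+1)$), proves exactly the statement as claimed, strict inequality and all; equivalently, integrality upgrades $n>d+j+1$ to $n\ge d+j+2$, which is precisely the step missing from the paper's rearrangement. You are also right, and more explicit than the paper, that the whole lower-bound half lives in the regime $n>d+1$ where (\ref{complementbound}) applies, and that $j=0$ is then immediate. In short: same skeleton for (\ref{lowerupperbound}), but your treatment of (\ref{nicelowerbound}) is the correct one and quietly repairs a slip in the paper's own write-up.
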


\begin{proof}
The condition $n\le c_{n-d}$ in (\ref{complementbound}) 
can be written as
$$ n^2-(2d+1)n+d(d+1)- {2\over m}d \ge 0. $$
By considering the roots of this quadratic polynomial in $n$, this is satisfied if and only if
$$ n\le d+{1\over2}-{\sqrt{{8\over m}d+1}\over 2}<d, 
\quad\hbox{or}\quad
 n\ge d+{1\over2}+{\sqrt{{8\over m}d+1}\over 2},  $$
which gives the lower bound in (\ref{lowerupperbound}). The upper bound is
the condition $n\le c_d$.

Rearranging the right hand inequality in 
$$ n \ge d+{1\over2}+{\sqrt{{8\over m}d+1}\over 2} \ge d+2+j, $$
gives 
$$ d \ge {m\over8}\bigl( (2j+3)^2-1\bigr) = {m\over2}(j+1)(j+2), $$
which gives (\ref{nicelowerbound}).
\end{proof}

The lower bound in (\ref{lowerupperbound}) is a decreasing function of $m$
and the upper bound is an increasing function of $m$.
This says that there is more room in $\Hd$ for tight equiangular 
lines than there is in $\Cd$, and in turn $\Rd$.

\begin{example}
\label{fivetightH^3}
(Five tight equiangular lines in $\HH^3$)
By Proposition \ref{tightlinesboundofn},
there cannot be five tight equiangular lines
in $\RR^3$ or $\CC^3$, but they could exist in $\HH^3$.
We now construct such lines
as a complementary tight frame. The following five tight equiangular lines in 
$\HH^2$ with $\gl={3\over8}$ are given by \cite{B20}
$$ V= \pmat{ 1 & {\sqrt{3}\over2\sqrt{2}} & {\sqrt{3}\over2\sqrt{2}} & {\sqrt{3}\over2\sqrt{2}} & {\sqrt{3}\over2\sqrt{2}} \cr
0 & {\sqrt{5}\over2\sqrt{2}} 
& -{\sqrt{5}\over6\sqrt{2}}+{\sqrt{5}\over3}i 
& -{\sqrt{5}\over6\sqrt{2}}-{\sqrt{5}\over6}i + {\sqrt{5}\over2\sqrt{3}}j 
& -{\sqrt{5}\over6\sqrt{2}}-{\sqrt{5}\over6}i - {\sqrt{5}\over2\sqrt{3}}j 
}. $$
The complementary tight frame therefore gives five equiangular lines in $\HH^3$ at 
angle $\gl={1\over6}$. A concrete presentation of these lines is
$$ W=\pmat{ 1 & -{1\over\sqrt{6}} & -{1\over\sqrt{6}} & -{1\over\sqrt{6}} & -{1\over\sqrt{6}}  \cr
0 & {\sqrt{5}\over\sqrt{6}} & 
-{\sqrt{5}\over3\sqrt{6}}-{\sqrt{5}\over3\sqrt{3}}i & 
-{\sqrt{5}\over3\sqrt{6}}+{\sqrt{5}\over6\sqrt{3}}i-{\sqrt{5}\over6}j & 
-{\sqrt{5}\over3\sqrt{6}}+{\sqrt{5}\over6\sqrt{3}}i+{\sqrt{5}\over6}j  \cr
0 & 0 & {\sqrt{5}\over3} & 
-{\sqrt{5}\over6}+{\sqrt{5}\over2\sqrt{3}} k &
-{\sqrt{5}\over6}-{\sqrt{5}\over2\sqrt{3}} k
}. $$
This was obtained by the following general method. The condition $VV^*=AI$ for $V$ to be a tight frame 
is that the entrywise conjugates of the rows of $V$ are orthogonal and of equal length, 
i.e., $V^*$ has orthogonal
columns of equal length. By using Gram-Schmidt, add orthogonal columns of equal length 
to obtain $[V^*, W^*]$ a scalar multiple of a unitary matrix. 
Then $W$ is a tight frame, which is the complement of $V$, since 
$$ \pmat{V^*&W^*} \pmat{V^*&W^*}^* 
= \pmat{V^*&W^*} \pmat{V\cr W}  = V^*V+W^*W=AI. $$
\end{example}

Above we used the fact that the columns of
the square matrix $\pmat{V^*& W^*}$ over $\HH$ are orthogonal.
For frames over $\CC$, this is equivalent to the rows
being orthogonal. For frames over the quaternions,
it is necessary to make this distinction. Indeed, there
exist unitary matrices (orthogonal columns) whose rows
are not orthogonal, e.g.,
$$ U:={1\over\sqrt{2}}\pmat{1&i\cr j&k}, \qquad
U^*U=UU^*=\pmat{1&0\cr0&1}, \qquad (U^T)^*(U^T)=\pmat{1&j\cr-j&1}. $$

\begin{example} 
\label{sixtightH^4}
By Proposition \ref{tightlinesboundofn},
there cannot be six tight equiangular lines
in $\RR^4$ or $\CC^4$, but they do exist in $\HH^4$,
by taking the complementary tight frame to the six tight equiangular
	lines in $\HH^2$ of \cite{K08}, \cite{B20} (obtained independently).
\end{example}


We now consider tight equiangular lines in general,
before giving a striking summary of the known results for two dimensions.
For $n$ tight equiangular lines in $\Hd$ (or $\Cd$, $\Rd$), the angle is
$$ \gl = {n-d\over d(n-1)}, \qquad n>d, $$
with the following specific cases (in order of the number of vectors)
$$ \gl=0 \quad\hbox{(orthonormal basis)}, \qquad
 \gl={1\over d^2} \quad\hbox{(vertices of a simplex)}, $$
and sets of lines giving the bounds of Theorem \ref{linearindepth}
$$ \gl={1\over d+2}, \qquad \gl={1\over d+1} \quad\hbox{(SIC)}, \qquad
\gl={1\over d+{1\over2}}\quad \hbox{(maximal set of lines in $\Hd$}). $$
The theory as is stands does not preclude the bounds above being reached by 
lines from a larger space, e.g., $n={1\over2}d(d+1)$ complex or even 
quaternionic lines in $\Hd$. This does not occur for two dimensions.
Since 
$$ \Bigl({\partial\gl \over \partial n}\Bigr)_d  = {d-1\over d(n-1)^2}>0, $$
$\gl$ increases with the number of tight equiangular lines $n$ (for $d$ fixed),
taking the possible values
$$ \gl=0,{1\over d^2}, \ldots, {1\over d+2},\ldots, {1\over d+1}, \ldots, {2\over 2d+1}. $$

Equiangular lines are classified up to projective unitary equivalence
(see Section \ref{projectequivsect}). \\
 In two dimensions, the tight equiangular 
lines given by an orthonormal basis, the Mercedes-Benz frame and the SIC (two, three and 
four vectors, respectively) are well known, as is their uniqueness in $\CC^2$.
Putting these examples together with the five and six sets of equianglar lines
of \cite{K08}, \cite{B20} gives a complete characterisation of equiangular lines in $\HH^2$.

\begin{theorem} There is a unique set of $n$ tight equiangular
lines in $\HH^2$ for $n=2,3,4,5,6$,
with corresponding angles
$\gl=0,{1\over4},{1\over3},{3\over8},{2\over 5}$.
\end{theorem}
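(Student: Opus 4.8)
The plan is to translate the whole question into the geometry of the $2\times 2$ Hermitian quaternionic matrices, where the dimension $2d^2-d=6$ makes everything explicit. By Theorem~\ref{linearindepth} each line gives a rank-one projection $P_j=v_jv_j^*$ in this $6$-dimensional real space, with $\Re\trace(P_jP_k)=\gl$ for $j\ne k$, and for $n$ tight equiangular lines in $\HH^2$ the angle is $\gl=\frac{n-2}{2(n-1)}$. I would write $P_j=\frac12(I+N_j)$, so that $N_j$ is traceless Hermitian and $P_j^2=P_j$ becomes $N_j^2=I$; equipping the $5$-dimensional space of traceless Hermitian matrices with $\langle A,B\rangle:=\frac12\Re\trace(AB)$, this says exactly $N_j\in\SS^4$, the unit sphere. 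Expanding $\frac12(I+N_j)\cdot\frac12(I+N_k)$ yields the identity $\Re\trace(P_jP_k)=\frac12+\frac12\langle N_j,N_k\rangle$, so equiangularity becomes $\langle N_j,N_k\rangle=2\gl-1=-\frac{1}{n-1}$ for $j\ne k$.

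Thus a configuration of $n$ tight equiangular lines in $\HH^2$ is the same thing as $n$ points on $\SS^4$ at constant pairwise inner product $-\frac{1}{n-1}$, i.e.\ the vertices of a regular $(n-1)$-simplex inscribed in $\SS^4$. The tightness identity $\sum_jP_j=\frac n2 I$ is equivalent to $\sum_jN_j=0$, and this is automatic because $\bigl\langle\sum_jN_j,\sum_jN_j\bigr\rangle=n-n(n-1)\frac{1}{n-1}=0$; so no further condition arises. Such a simplex exists exactly when its affine span fits in $\RR^5$, that is $n-1\le5$, which simultaneously gives the bound $n\le 6$ and the existence of the five configurations $n=2,3,4,5,6$ with $\gl=0,\frac14,\frac13,\frac38,\frac25$, recovering the earlier examples uniformly.

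For uniqueness I would use that two families of vectors in $\RR^5$ with equal Gram matrices differ by an element of $\mathrm{O}(5)$, so the inscribed regular simplices are unique up to $\mathrm{O}(5)$ and relabelling of vertices. The remaining task is to match this symmetry with projective unitary equivalence of the lines. Conjugation $N\mapsto UNU^*$ by a quaternionic unitary ($U^*U=I$) preserves the trace, the relation $N^2=I$, and $\langle\cdot,\cdot\rangle$, hence defines a homomorphism $\rho:\mathrm{Sp}(2)\to\mathrm{SO}(5)$ with kernel $\{\pm I\}$; since $\dim\mathrm{Sp}(2)=10=\dim\mathrm{SO}(5)$ and $\mathrm{Sp}(2)$ is connected, $\rho$ is the double cover $\mathrm{Sp}(2)\cong\mathrm{Spin}(5)$ and in particular is onto $\mathrm{SO}(5)$. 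Because $P_j\mapsto UP_jU^*$ moves the sphere points by $\rho(U)$, projective unitary equivalence realises all of $\mathrm{SO}(5)$ together with all permutations of the lines. An orientation-reversing $\mathrm{O}(5)$-element relating two configurations is absorbed by observing that a transposition of two simplex vertices acts on their span as a reflection (determinant $-1$); composing with such a relabelling returns us to $\mathrm{SO}(5)$. Hence each $n$ yields a single class up to projective unitary equivalence.

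The main obstacle is the third step: establishing that the conjugation action of the quaternionic unitaries is precisely $\mathrm{SO}(5)$ (the exceptional isomorphism $\mathrm{Sp}(2)\cong\mathrm{Spin}(5)$), and then correctly descending ``unique up to $\mathrm{O}(5)$'' to ``unique up to projective unitary equivalence'' by absorbing the orientation-reversing part into a relabelling of the lines. The identification of inner products in the first two paragraphs and the rigidity of inscribed regular simplices are routine by comparison.
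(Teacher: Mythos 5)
Your proof is correct, and it is genuinely more than what the paper itself provides. The paper establishes this theorem essentially by citation: uniqueness for $n=2,3,4$ is taken as known, the cases $n=5,6$ are attributed to \cite{K08} and \cite{B20}, and the only proof mechanism the paper exhibits is the Hopf-map identity (\ref{Hopfinpro}), recounted from \cite{K08} near the end of Section \ref{projectequivsect} and used there only for existence. Your sphere model is exactly the invariant form of that Hopf correspondence: writing $P_j=\frac12(I+N_j)$ and computing $\Re\trace(P_jP_k)=\frac12+\frac12\langle N_j,N_k\rangle$ reproduces (\ref{Hopfinpro}) without coordinates, and both routes identify $n$ tight equiangular lines in $\HH^2$ with regular $(n-1)$-simplices inscribed in $\SS^4$, giving existence, the bound $n\le 6$, and the angles $\gl=\frac{n-2}{2(n-1)}$ uniformly (your observation that $\sum_j N_j=0$ is forced by the Gram matrix alone, so equiangularity at this angle is automatically tight, is a nice bonus). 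The substantive addition is your third paragraph: simplex rigidity gives uniqueness only up to $\mathrm{O}(5)$ in the sphere model, and converting that into uniqueness up to projective unitary equivalence of lines requires precisely the two steps you supply --- surjectivity of the conjugation homomorphism $\mathrm{Sp}(2)\to\mathrm{SO}(5)$ with kernel $\{\pm I\}$ (the exceptional isomorphism $\mathrm{Sp}(2)\cong\mathrm{Spin}(5)$), and absorption of determinant $-1$ isometries into a transposition of two simplex vertices, which acts as a reflection fixing the remaining vertices. Neither step appears in the paper. The paper's route buys brevity by outsourcing the hard direction to the literature; yours buys a self-contained, uniform proof of all five cases within the paper's own framework. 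One trivial slip: the formula $\gl=\frac{n-2}{2(n-1)}$ is the equality case of (\ref{lamdalinesboung}), not part of Theorem \ref{linearindepth}.
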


\subsection{Equi-isoclinic and equichordal subspaces}

We now consider generalisations of equiangularity of lines to 
$r$-subspaces ($r$-dimensional subspaces). Let $P_j$ and $P_k$ 
be the orthogonal projections onto $r$-subspaces $V_j$ and $V_k$.
Then 
$$ \norm{P_j-P_k}_F^2
= \trace((P_j-P_k)^2)
= 2r-\trace(P_jP_k+P_kP_j) \ge 0. $$
For $\Fd=\Rd,\Cd$, we have $\trace(P_jP_k)=\trace(P_kP_j)\in\RR$, and a collection of 
$r$-subspaces is said to be {\bf equichordal} (see \cite{FJMW17}) if 
the corresponding orthogonal projections satisfy
$$ \inpro{P_j,P_k}_F = \trace(P_jP_k)=\gl r, \qquad j\ne k, $$
which reduces to the equiangularity condition (\ref{equiangulardefn})
in the case of lines ($r=1$).

For $\Hd$, $\trace(P_jP_k)$ need not be real, 
nor equal to $\trace(P_kP_j)$,  e.g., for
$$ P={1\over 2}\pmat{1&-i\cr i&1}, \quad
Q= {1\over 2}\pmat{1&-j\cr j&1}, \qquad
PQ= {1\over 4}\pmat{1-k&-i-j\cr i+j&1-k}, $$
and $\trace(PQ)={1\over2}(1-k)\ne{1\over2}(1+k)=\trace(QP)$.
However, by (\ref{Htraceformulareal}), we do have
$$ \trace(P_jP_k+P_kP_j) = \Re(\trace(P_jP_k+P_kP_j))
=2\Re(\trace(P_jP_k)), $$
and so we say that $r$-subspaces in $\Hd$ (or $\Rd,\Cd$) 
are {\bf equichordal} if
\begin{equation}
\label{equichordalHdefn}
\Re(\trace(P_jP_k))=\gl r, \quad j\ne k
\Iff\norm{P_j-P_k}_F^2=2(1-\gl)r, \quad j\ne k.
\end{equation}

Two $r$-subspaces $\cV_j$ and $\cV_k$, $j\ne k$, are {\bf isoclinic} with parameter
$0\le\gl\le1$ (see \cite{LS73}, \cite{H76}, \cite{FIJM24}) 
if the orthogonal projection $P_{jk}$ onto $\cV_j+\cV_k$ satisfies
$$ (1-\gl) P_{jk} = (P_j-P_k)^2. $$
An equivalent condition to being isoclinic is
\begin{equation}
\label{isoclinicequivcdn}
P_jP_kP_j = \gl P_j,\qquad P_kP_jP_k = \gl P_k, \qquad j\ne k,
\end{equation}
which follows from the observation
$$ (1-\gl)P_j=(P_j-P_k)^2P_j \Iff P_jP_kP_j=\gl P_j. $$
Hoggar \cite{H76} claims that just one of the conditions (\ref{isoclinicequivcdn})
is required (over $\HH$),
which follows by writing $P_j=V_jV_j^*$, $V_j^*V_j=I$, and the implications
$$ P_jP_kP_j=\gl P_j \Iff
(V_j^*V_k)(V_k^*V_j)=\gl I
\Iff V_k^*V_j V_j^*V_k =\gl I
\Iff P_kP_jP_k=\gl P_k. $$
Subspaces $(V_j)$ are said to be {\bf equi-isoclinic} with parameter $0\le\gl\le1$ if
(\ref{isoclinicequivcdn}) holds.  
Equi-isoclinic subspaces are equichordal, since
$$ P_jP_kP_j=\gl P_j \Implies
\Re(\trace(P_jP_k))=\Re(\trace(P_jP_kP_j))= \trace(\gl P_j)=\gl r. $$
The orthogonal complement $(V_j^\perp)$ of equichordal subspaces is equichordal, 
since
$$ \Re(\trace((I-P_j)(I-P_k)))
= d-r-r+\Re(\trace(P_jP_k))=d-2r+\gl r, \quad j\ne k. $$
However, the orthogonal complements $(V_j^\perp)$ of equi-isoclinic subspaces
$(V_j)$ are
not in general equi-isoclinic, as the following example shows. 

\begin{example} (Two isoclinic planes do not exist in $\RR^3$).
Consider the equi-isoclinic $1$-dimensional subspaces given by 
$$ v_1=\pmat{1\cr0\cr0}, \quad
v_2=\pmat{\sqrt{1-a^2-b^2}\cr a\cr b}. $$
The orthogonal projections $Q_j=I-v_jv_j^*$ onto the complementary subspaces satisfy
$$ Q_1Q_2Q_1=\pmat{0&0&0\cr 0& 1-a^2 & -ab\cr 0& -ab & 1-b^2},
\qquad Q_1=\pmat{0&0&0\cr0&1&0\cr0&0&1}. $$
Hence, for $V_1^\perp$ and $V_2^\perp$ to be isoclinic, we must have that $a=b=0$,
i.e., $V_1=V_2$. Thus there cannot be two (nonequal) isoclinic planes in $\RR^3$,
despite the fact that there can be up to six equi-isoclinic lines in $\RR^3$.
\end{example}

%


\section{From $\RR$ to $\CC$ and $\CC$ to $\HH$, and back}
\label{RtoCtoH}

There is a natural inclusion $\RR\subset\CC\subset\HH$ and
hence of $\Rd\subset\Cd\subset\Hd$. Since tight frames are determined up to 
unitary equivalence by their Gramians:
\begin{itemize}
\item There is a unitary map of a tight frame to $\Rd$ if and only if its Gramian has real entries,
and we say the tight frame is {\bf real}.
\item There is a unitary map of a tight frame to $\Cd$ if and only if its Gramian has complex entries,
and we say the tight frame is {\bf complex} if its Gramian has a nonreal entry.
\item If the Gramian of a tight frame has a noncomplex entry, 
then we say that it is a {\bf quaternionic} tight frame.
\end{itemize}
As an example, the four equiangular lines in $\HH^2$ of Hoggar \cite{H76} 
are lines in $\CC^2$ (see Example \ref{HoggarlinesinC^2}).
For tight frames up to projective unitary equivalence, i.e., thought of as lines, the 
corresponding analogue is more involved, see Section \ref{projectequivsect}.

There is also a natural identification of a point $z=x+iy\in\CC$ 
(in the complex plane) with a point $(x,y)\in\RR^2$ (in the plane). 
We generalise this, by defining an 
invertible $\RR$-linear map
\begin{equation}
\label{CdR2dcorrespondence}
[\cdot]_\RR: \Cd\to\RR^{2d}: v\mapsto\pmat{\Re v\cr\Im v}, \qquad
\Re v={v+\overline{v}\over2}, \quad \Im v={v-\overline{v}\over 2i}.
\end{equation}
Based on a thorough analysis of this, we will then define an analogous map $\Hd\to\CC^{2d}$.
The first subtle point, is that
$[\cdot]_\RR$ maps $k$-dimensional complex-subspaces of $\Cd$ to real $(2k)$-dimensional subspaces of $\RR^{2d}$.
To see why this is, 
we first calculate the image of a 
complex scalar multiple $\ga+i\gb$ of a vector $v=x+iy$ 
$$ (\ga+i\gb) v 
= (\ga+i\gb) (x+iy) 
= \ga x-\gb y+i(\ga y+\gb x), $$
which gives
\begin{equation}
\label{Csubspaceimage}
[ (\ga+i\gb) v ]_\RR 
= \ga \pmat{\Re v\cr\Im v} 
+ \gb \pmat{-\Im v\cr\Re v} 
= \ga [v]_\RR + \gb [iv]_\RR.
\end{equation}
Thus the one-dimensional complex subspace spanned by $v\in\Cd$
is mapped to the real two-dimensional subspace
$$ [\spam_\CC\{v\}]_\RR= \spam_\RR\{\pmat{\Re v\cr\Im v},\pmat{-\Im v\cr\Re v}\} \qquad
\hbox{(orthogonal vectors in $\RR^{2d}$)}. $$
The general result
then follows from the correspondence between linear
dependencies
$$ \sum_\ell (\ga_\ell+i\gb_\ell)v_\ell =0 \Iff
 \sum_\ell\Bigl\{ \ga_\ell\pmat{\Re v_\ell\cr\Im v_\ell }+\gb_\ell\pmat{-\Im v_\ell \cr\Re v_\ell}
\Bigr\} =0. $$
We also calculate
\begin{align*}
\inpro{v,w}
&= \inpro{\Re v+i\Im v,\Re w+i\Im w} \cr
&= \inpro{\Re v,\Re w}+\inpro{\Im v,\Im w}
+ i ( \inpro{\Re v,\Im w} - \inpro{\Im v,\Re w}),
\end{align*}
so that
\begin{equation}
\label{CtoRinpro}
\Re(\inpro{v,w}_\CC) = \inpro{[v]_\RR,[w]_\RR}_\RR, \quad
\Im(\inpro{v,w}_\CC) = \inpro{[iv]_\RR,[w]_\RR}_\RR, \qquad
\end{equation}
\begin{equation}
\label{CtoRortho}
\inpro{[v]_\RR,[iv]_\RR}_\RR=0. 
\end{equation}
Let $A:\CC^n\to\CC^m$ a $\CC$-linear map be represented as an $\RR$-linear map 
$[A]_\RR:\RR^{2n}\to\RR^{2m}$
under this identification, 
i.e., $[A]_\RR:= [\cdot]_\RR A[\cdot]_\RR^{-1}$. Then
\begin{align*} 
A(u+iv) &=(\Re(A)+i\Im(A))(u+iv) \cr
&= \Re(A)u-\Im(A)v+i\Im(A)u+i\Re(A)v, \quad u,v\in\RR^n,
\end{align*}
and $\Re(A^*)=\Re(A)^T$,
$\Im(A^*)=-\Im(A)^T$, 
so that
\begin{align*}
[A]_\RR &=\pmat{\Re(A)&-\Im(A)\cr\Im(A)&\Re(A)}, \qquad
\rank([A]_\RR)=2\rank(A), \cr
[A^*]_\RR&=\pmat{\Re(A)^T&\Im(A)^T\cr-\Im(A)^T&\Re(A)^T} = [A]_\RR^T.
\end{align*}
The usual rules for matrix multiplication follow, 
e.g, $[A]_\RR [B]_\RR = [AB]_\RR$. 
One must be careful if a vector $v\in\Cd$ is being thought of as
a $d\times 1$ matrix, i.e., the linear map $[v]:\CC\to\Cd:\ga\mapsto\ga v$,
since $[v]_\RR\in\RR^{2d\times 1}$, $[[v]]_\RR\in\RR^{2d\times 2}$.
In particular, the familiar formula $P=vv^*$ for the orthogonal projection
onto a unit vector $v\in\Cd$, is $P=[v][v]^*$, which maps as follows
$$ [P]_\RR=[[v]]_\RR[[v]^*]_\RR=[[v]]_\RR[[v]]_\RR^T, \qquad
[[v]]_\RR=\pmat{\Re v&-\Im v\cr\Im v&\Re v} . $$
This is the orthogonal projection onto
$$ [\spam_\CC\{v\}]_\RR
=\spam_\RR\{[v]_\RR,[iv]_\RR\}, \qquad
[v]_\RR=\pmat{\Re v\cr\Im v}, \quad [iv]_\RR=\pmat{-\Im v\cr\Re v}. $$
The identification $[\cdot]_\RR$ preserves various properties of linear maps, 
see Theorem \ref{mapspropspreserved}.
In particular, orthogonal projections map to orthogonal projections, and
hence:
\begin{itemize}
\item Equi-isoclinic subspaces of dimension $r$ in $\Cd$ 
correspond to equi-isoclinic subspaces of dimension $2r$ in $\RR^{2d}$,
and similarly for equichordal subspaces.
\end{itemize}

We now consider the situation for tight frames, which is somewhat more involved, e.g., 
a basis for $\Cd$ does not correspond to a basis for $\RR^{2d}$
(which has twice the dimension).
%
%
Let $V=V_1+iV_2$ be the synthesis map for a sequence of vectors $v_1,\ldots,v_n\in\Cd$, 
and $V_\RR$ be the corresponding map for the sequence $[v_1]_\RR,\ldots,[v_n]_\RR\in\RR^{2d}$,  i.e.,
$$ V_\RR= \pmat{V_1\cr V_2}\in\RR^{2d\times n}. $$
Then $V$ gives a tight frame for $\Cd$ if and only if
$$ VV^*=(V_1+iV_2)(V_1^*-iV_2^*)=V_1V_1^*+V_2V_2^*+i(V_2V_1^*-V_1V_2^*)=AI, $$
where 
$dA := \sum_j\norm{v_j}^2 = \trace(VV^*)=\trace(V_\RR V_\RR^T)= $,
i.e.,
$$ V_1V_1^T+V_2V_2^T=AI, \qquad V_2V_1^T-V_1V_2^T=0, $$
and $V_\RR$ gives a tight frame for $\RR^{2d}$ if and only if
$$ V_\RR V_\RR^*=\pmat{V_1\cr V_2}\pmat{V_1^T & V_2^T}
=\pmat{V_1V_1^T& V_1V_2^T\cr V_2V_1^T& V_2V_2^T}
= {1\over 2} A \pmat{ I&0\cr0& I}, $$
i.e.,
\begin{equation}
\label{Sversionofequiv}
V_1V_1^T = V_2 V_2^T = {1\over 2}AI,  \qquad V_1V_2^T=V_2V_1^T=0.
\end{equation}
Thus all tight frames for $\RR^{2d}$ map to tight frames for $\Cd$, and 
a tight frame for $\Cd$ gives a tight frame for $\RR^{2d}$ if and only if
(\ref{Sversionofequiv}) holds.
This condition says that $V_1$ and $V_2$ are tight frames for $\RR^d$
(with the same frame bound) which are orthogonal (see \cite{W18} \S 3.5).
We now show that 
(\ref{Sversionofequiv}) 
depends only on $V$ up to unitary equivalence.

Let $U=U_1+iU_2$ be unitary, then
$UU^*=U_1U_1^T+U_2U_2^T+i(U_2U_1^T-U_1U_2^T)=I$, which is equivalent to
\begin{equation}
\label{unitarycdnI}
U_1U_1^T+U_2 U_2^T = I, \qquad U_2U_1^T-U_1U_2^T=0.
\end{equation}
Suppose that $V$ satisfies (\ref{Sversionofequiv}), then 
$$ UV=[Uv_1,\ldots,Uv_n]= (U_1+iU_2)(V_1+iV_2)=(U_1V_1-U_2V_2)+i(U_2V_1+U_1V_2), $$
$A=\sum_j\norm{v_j}^2=\sum_j \norm{Uv_j}^2$,
and using (\ref{unitarycdnI}), we calculate
\begin{align*}
\Re(UV)\Re(UV)^T &= (U_1V_1-U_2V_2) (V_1^TU_1^T-V_2^TU_2^T) 
= {1\over2} A (U_1U_1^T + U_2U_2^T)
= {1\over2}A I, \cr
\Im(UV)\Im(UV)^T 
&= (U_2V_1+U_1V_2)(V_1^TU_2^T+V_2^TU_1^T)
= {1\over2}A(U_2U_2^T+U_1U_1^T)= {1\over2}AI, \cr
\Re(UV)\Im(UV)^T &= (U_1V_1-U_2V_2) (V_1^TU_2^T+V_2^TU_1^T)
= {1\over2} A (U_1U_2^T-U_2U_1^T)= 0, 
\end{align*}
so that $UV$ satisfies (\ref{Sversionofequiv}).

Since the condition for a tight frame for $\Cd$ to be a tight frame for $\RR^{2d}$ depends
only on $V$ up to unitary equivalence, it follows 
that this condition 
can be written in terms of the Gramian of $V$. The Gramians of $V$ and $V_\RR$ are
$$ V^*V= (V_1^*-iV_2^*) (V_1+iV_2) = V_1^TV_1+V_2^TV_2+i(V_1^TV_2-V_2^TV_1), $$
$$ V_\RR^*V_\RR = \pmat{V_1^T& V_2^T} \pmat{V_1\cr V_2} = V_1^TV_1+V_2^TV_2. $$
The variational characterisation for being a tight frame for $\Cd$ and for $\RR^{2d}$
are 
$$ \norm{V^*V}_F^2 = {1\over d}(\trace(V^*V))^2, \qquad
\norm{V_\RR^*V_\RR}_F^2 = {1\over 2d}(\trace(V_\RR^*V_\RR))^2. $$
Since $\trace(V^*V)=  \trace(V_1^TV_1+V_2^TV_2) =\trace(V_\RR^* V_\RR)$, 
a tight frame for $\Cd$ gives a tight frame for $\RR^{2d}$ if and only if
\begin{equation}
\label{CtoRcdnI}
2 \norm{V_\RR^*V_\RR}_F^2 -\norm{V^*V}_F^2  =0. 
\end{equation}
By writing this explicitly in terms of $V^*V$ 
(cf \cite{W20b}),
we obtain the following.


\begin{theorem} 
\label{tightframesRtoC}
Let $[\cdot]_\RR:\Cd\to\RR^{2d}$ be the correspondence (\ref{CdR2dcorrespondence}) between
$\Cd$ and $\RR^{2d}$. Then 
\begin{enumerate}
\item Tight frames for $\RR^{2d}$ correspond to tight frames for $\Cd$.
\item A tight frame $V=[v_1,\ldots,v_n]$ for $\Cd$ corresponds to a tight frame
for $\RR^{2d}$ if and only if it satisfies
\begin{equation}
\label{CtoRcdnII}
\sum_j\sum_k \inpro{v_j,v_k}^2 =0,
\end{equation}
which can also be written as
\begin{equation}
\label{CtoRcdnIIextra}
\sum_j\sum_k (\Re \inpro{v_j,v_k})^2 
= \sum_j\sum_k (\Im \inpro{v_j,v_k})^2.
\end{equation}
\end{enumerate}
\end{theorem}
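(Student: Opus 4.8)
The plan is to treat the two parts in turn, leaning on the algebra assembled immediately before the statement. Part (1) is essentially already done: the computation producing (\ref{Sversionofequiv}) shows that any tight frame for $\RR^{2d}$, written $V_\RR=\pmat{V_1\cr V_2}$, satisfies $V_1V_1^T=V_2V_2^T=\frac12AI$ and $V_1V_2^T=V_2V_1^T=0$, whence $VV^*=(V_1+iV_2)(V_1^T-iV_2^T)=V_1V_1^T+V_2V_2^T+i(V_2V_1^T-V_1V_2^T)=AI$, so the associated $V$ is a tight frame for $\Cd$. I would simply invoke that calculation.

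For part (2), the reduction of the problem to the single scalar identity (\ref{CtoRcdnI}), namely $2\norm{V_\RR^*V_\RR}_F^2-\norm{V^*V}_F^2=0$, is already in hand, so the remaining task is purely to unpack it in terms of the Gramian $G=V^*V$. The key structural facts I would use are that $V_\RR^*V_\RR=\Re(G)$ is a real symmetric matrix, so $\norm{V_\RR^*V_\RR}_F^2=\sum_{j,k}(\Re G_{jk})^2$, while $\norm{V^*V}_F^2=\sum_{j,k}|G_{jk}|^2=\sum_{j,k}\bigl((\Re G_{jk})^2+(\Im G_{jk})^2\bigr)$. Substituting these into (\ref{CtoRcdnI}) collapses it to $\sum_{j,k}\bigl((\Re G_{jk})^2-(\Im G_{jk})^2\bigr)=0$. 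Since the sum runs over all ordered pairs $(j,k)$ and conjugate symmetry $G_{jk}=\inpro{v_k,v_j}=\overline{\inpro{v_j,v_k}}$ leaves squared real and imaginary parts unchanged, this is exactly (\ref{CtoRcdnIIextra}).

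It remains to identify the complex condition (\ref{CtoRcdnII}) with this real one, and this is the step I expect to carry the real content. Expanding $\inpro{v_j,v_k}^2=(\Re\inpro{v_j,v_k})^2-(\Im\inpro{v_j,v_k})^2+2i\,\Re\inpro{v_j,v_k}\,\Im\inpro{v_j,v_k}$, the real part of $\sum_{j,k}\inpro{v_j,v_k}^2$ is precisely the left-minus-right of (\ref{CtoRcdnIIextra}). The point to verify is that the imaginary part $2\sum_{j,k}\Re\inpro{v_j,v_k}\,\Im\inpro{v_j,v_k}$ vanishes identically, so that (\ref{CtoRcdnII}) is not a further complex constraint but merely the real statement (\ref{CtoRcdnIIextra}) in disguise. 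This follows again from $\inpro{v_k,v_j}=\overline{\inpro{v_j,v_k}}$: pairing the ordered term $(j,k)$ with $(k,j)$, the real parts agree while the imaginary parts differ in sign, so the off-diagonal contributions cancel, and on the diagonal $\Im\inpro{v_j,v_j}=\Im\norm{v_j}^2=0$. Hence the double sum in (\ref{CtoRcdnII}) is automatically real and equals the quantity set to zero above, giving the equivalence. The only subtlety to watch is exactly this automatic cancellation — recognising that the Hermitian symmetry of the Gramian is what reduces an ostensibly two-real-dimensional complex condition to a single real one.
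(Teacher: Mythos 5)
Your proposal is correct and takes essentially the same route as the paper: both rest on the prior reduction of part (1) to the discussion preceding the theorem and of part (2) to the scalar condition (\ref{CtoRcdnI}), and both then combine the identification $\norm{V_\RR^*V_\RR}_F^2=\sum_j\sum_k(\Re\inpro{v_j,v_k})^2$ (your $V_\RR^*V_\RR=\Re(V^*V)$ is just (\ref{CtoRinpro}) restated) with the Hermitian symmetry of the Gramian. The only difference is the order of the last step: you obtain (\ref{CtoRcdnIIextra}) first and then show (\ref{CtoRcdnII}) is the same real statement via cancellation of the imaginary part, while the paper uses the identity $2(\Re z)^2-|z|^2=\tfrac12(z^2+\overline{z}^2)$ to reach (\ref{CtoRcdnII}) first -- the same algebra read in the opposite direction.
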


\begin{proof} In light of our previous discussion, it remains only to
show that (\ref{CtoRcdnI}) can be written as (\ref{CtoRcdnII}) and
(\ref{CtoRcdnIIextra}).
Using (\ref{CtoRinpro}), we have
\begin{align*}
\norm{V_\RR^*V_\RR}_F^2 -\norm{V^*V}_F^2
 &= 2\sum_j\sum_k\inpro{[v_j]_\RR,[v_k]_\RR}^2 - \sum_j\sum_k|\inpro{v_j,v_k}|^2 \cr
& = 2\sum_j\sum_k(\Re\inpro{v_j,v_k})^2 
- \sum_j\sum_k|\inpro{v_j,v_k}|^2 =0.
\end{align*}
By taking $z=\inpro{v_j,v_k}$ in 
$$ 2(\Re(z))^2 - |z|^2
= 2\Bigl({z+\overline{z}\over2}\Bigr)^2 - z\overline{z}
= {1\over2} (z^2+\overline{z}^2), $$
we see that this condition can be written as
$$ {1\over2}\sum_j\sum_k \bigl( \inpro{v_j,v_k}^2+\inpro{v_k,v_j}^2\bigr)
=\sum_j\sum_k\inpro{v_j,v_k}^2=0. $$
which gives (\ref{CtoRcdnII}).
By substituting in 
$ |\inpro{v_j,v_k}|^2=(\Re\inpro{v_j,v_k})^2+(\Im\inpro{v_j,v_k})^2, $
we obtain (\ref{CtoRcdnIIextra}).
\end{proof}

\begin{example} A tight frame $(z_j)$ for $\CC$ corresponds to a tight frame for $\RR^2$
if and only if 
$$ 
\sum_j\sum_k (z_j\overline{z_k})^2
= \Bigl(\sum_j z_j^2\Bigr)\Bigl(\sum_k \overline{z_k}^2 \Bigr)
= \Bigl|\sum_j z_j^2\Bigr|^2=0 \Iff
\sum_j z_j^2=0. $$
The complex number $z_j^2=(x_j+iy_j)^2$ corresponding to a point $(x_j,y_j)$ is 
sometimes called a diagram vector, and the condition that a frame for $\RR^2$ is tight if and
only if its diagram vectors sum to zero is well known.
\end{example}

We now give a map $\HH^d\to\CC^{2d}$
that has similar properties to $[\cdot]_\RR:\Cd\to\RR^{2d}$. 
This is based on the following analogue of the polar decomposition
for $\CC$, the Cayley-Dickson construction, that every quaternion $q\in\HH$ can be written uniquely
\begin{equation}
\label{jswap}
q=z+w j, \qquad z,w\in\CC.
\end{equation}
Moreover, we observe the ``commutativity'' relation
$$ jz = \overline{z}j, \qquad\forall z\in\CC, $$
which implies
$$ jA=\overline{A}j, \qquad\forall A\in\CC^{m\times n}. $$
Let $\HH^d$ be a right vector space,
and define a $\CC$-linear map
\begin{equation}
\label{HdC2dcorrespondence}
[\cdot]_\CC:\HH^d\to\CC^{2d}:z+wj\mapsto\pmat{z\cr\overline{w}}, 
\end{equation}
The conjugation $\overline{w}$ is necessary for $\CC$-linearity:
$(z+wj)\ga=z\ga+wj\ga=z\ga+w\overline{\ga}j$ gives
$$ [(z+wj)\ga]_\CC 
=\pmat{z\ga\cr\overline{w}\ga} 
=\pmat{z\cr\overline{w}}\ga 
=[z+wj]_\CC\ga 
\qquad\forall\ga\in\CC. $$
Let $\Co_1$ and $\Co_2$ be the $\CC$-linear maps $\Hd\to\Cd$ 
giving the ``complex coordinates'' of $q=z+wj$, i.e.,
$$ \Co_1(z+wj):=z, \qquad \Co_2(z+wj):=\overline{w}.$$
We note in particular, that
$$ |q|^2=|\Co_1(q)|^2+|\Co_2(q)|^2. $$
From 
\vskip-1.0truecm
\begin{align*}
\inpro{v,w}_\HH
&= \inpro{v_1+v_2j,w_1+w_2j} \cr
&= \inpro{v_1,w_1}-j\inpro{v_2,w_2}j -j\inpro{v_2,w_1} +\inpro{v_1,w_2}j, \cr
	&= \inpro{v_1,w_1}+\inpro{\overline{v_2},\overline{w_2}} 
	- (\inpro{\overline{v_2},\overline{w_1}} +\inpro{v_1,-w_2} )j, \cr
        & = \inpro{[v]_\CC,[w]_\CC}_\CC-\inpro{[v]_\CC,[wj]_\CC}_\CC j
\end{align*}
we get the analogues of 
(\ref{CtoRinpro})
and (\ref{CtoRortho})
\begin{equation}
\label{HtoCinpro}
\Co_1(\inpro{v,w}_\HH)=\inpro{[v]_\CC,[w]_\CC}_\CC, \qquad
\Co_2(\inpro{v,w}_\HH)=-\inpro{[v]_\CC,[wj]_\CC}_\CC.
\end{equation}
\begin{equation}
\label{HtoCortho}
\inpro{[v]_\CC,[vj]_\CC}_\CC=0. 
\end{equation}

The analogue of (\ref{Csubspaceimage}) for $v=z+wj$ is
\begin{equation}
\label{Hsubspaceimage}
[ v(\ga+\gb j) ]_\CC 
= [ v\ga+ v\gb j ]_\CC 
= [ v\ga+ v j\overline{\gb}]_\CC 
= [ v]_\CC\ga+ [v j ]_\CC \overline{\gb}, \qquad\ga,\gb\in\CC,
\end{equation}
where
$$ [v]_\CC = \pmat{z\cr\overline{w}}, \quad
[vj]_\CC = \pmat{-w\cr\overline{z}}, \qquad
\inpro{[v]_\CC,[vj]_\CC}_\CC=0.  $$
Thus $[\cdot]_\CC$ maps $k$-dimensional $\HH$-subspaces of $\Hd$ to $(2k)$-dimensional
$\CC$-subspaces of $\CC^{2d}$.

Let $L:\HH^n\to\HH^m$ be an $\HH$-linear map be represented as a $\CC$-linear map 
$[L]_\CC:\CC^{2n}\to\CC^{2m}$
under this identification, 
i.e., $[L]_\CC:= [\cdot]_\CC L[\cdot]_\CC^{-1}$. 
In view of (\ref{jswap}),
its standard matrix $[L]_\HH\in\HH^{m\times n}$
has a unique decomposition
$$ [L]_\HH=A+Bj, \qquad A,B\in\CC^{m\times n}. $$
We have
\begin{align*}
L(z+wj) & = (A+Bj)(z+wj)
= Az+Awj+Bjz+Bjwj  \cr
&= Az+Awj+B\overline{z}j-B\overline{w}
= Az -B\overline{w} +\overline{( \overline{B}z + \overline{A}\overline{w})}j, 
\end{align*}
and 
$$ [L^*]_\HH = (A+Bj)^* = A^*+(-j)B^* = A^*-\overline{B^*}j=A^*-B^T j, $$
so that
\begin{align*}
[L]_\CC & = \pmat{ A&-{B}\cr \overline{B}&\overline{A}}, \qquad
\rank([L]_\CC)=2\rank([L]_\HH), \cr
[L^*]_\CC &= \pmat{A^*&B^T\cr -B^*&A^T}=[L]_\CC^*.
\end{align*}
The other observations for the previous case also hold 
(see Theorem \ref{mapspropspreserved}),
in particular
\begin{itemize}
\item Equi-isoclinic subspaces of dimension $r$ in $\Hd$ 
correspond to equi-isoclinic subspaces of dimension $2r$ in $\CC^{2d}$,
and similarly for equichordal subspaces.
\end{itemize}

We now seek the analogue of Theorem \ref{tightframesRtoC}, this time starting
with the development in terms of the Gramian.
The variational characterisation
 for $V=[v_1,\ldots,v_n]$ being a tight frame for $\Hd$ and
for $V_\CC:=\bigl[[v_1]_\CC,\ldots,[v_n]_\CC\bigr]$ being a tight frame for $\CC^{2d}$
are 
$$ \norm{V^*V}_F^2 = {1\over d}(\trace(V^*V))^2, \qquad
\norm{V_\CC^*V_\CC}_F^2 = {1\over 2d}(\trace(V_\CC^*V_\CC))^2. $$
Since $\trace(V^*V)=\trace(V_\CC^* V_\CC)$, 
a tight frame for $\Hd$ gives a tight frame for $\CC^{2d}$ if and only if
\begin{equation}
\label{HtoCcdnI}
2 \norm{V_\CC^*V_\CC}_F^2 -\norm{V^*V}_F^2  =0. 
\end{equation}
Writing this explicitly in terms of the Gramian $V^*V$ 
gives the following.


\begin{lemma}
\label{tightframesCtoHlemma}
Let $V=[v_1,\ldots,v_n]=V_1+V_2j\in\HH^{d\times n}$. 
Then the following are equivalent
\begin{enumerate}[\rm (i)]
\item $V_\CC=\bigl[[v_1]_\CC,\ldots,[v_n]_\CC\bigr]=\pmat{V_1\cr \overline{V_2}}\in\CC^{2d\times n}$ is a tight frame for $\CC^{2d}$.
\item $$ V_1V_1^*=V_2V_2^*={1\over2}AI, \qquad V_1V_2^T=V_2V_1^T=0, \qquad
A:={1\over d} \sum_j\norm{v_j}^2. $$
\item $$ 
(V_1^*V_1+V_2^T\overline{V_2})^2
= {1\over 2}A\,(V_1^*V_1+V_2^T\overline{V_2}) , \qquad
A:={1\over d}\sum_j\norm{v_j}^2. $$
\item $$ \norm{\Co_1(V^*V)}_F^2=\sum_j\sum_k |\Co_1(\inpro{v_j,v_k})|^2
={1\over 2d} \Bigl(\sum_j\norm{v_j}^2\Bigr)^2. $$
\end{enumerate}
\end{lemma}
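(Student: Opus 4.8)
The plan is to reduce all four conditions to the single statement that the complex frame $V_\CC$ is a tight frame for $\CC^{2d}$, and then read each condition off from the complex theory already established for $V_\CC$ (Proposition \ref{tightframeequidefs} and Theorem \ref{generalisedWelchbound}), after translating the quaternionic data through $[\cdot]_\CC$. The two facts I would record at the outset are the trace identity $\trace(V_\CC^*V_\CC)=\sum_j\norm{[v_j]_\CC}^2=\sum_j\norm{v_j}^2=dA$ (using $|q|^2=|\Co_1(q)|^2+|\Co_2(q)|^2$), and, crucially, the Gramian identity
$$ V_\CC^*V_\CC = \pmat{V_1^*&V_2^T}\pmat{V_1\cr\overline{V_2}} = V_1^*V_1+V_2^T\overline{V_2}=\Co_1(V^*V), $$
the last equality holding entrywise by (\ref{HtoCinpro}), since $\Co_1(\inpro{v_k,v_j}_\HH)=\inpro{[v_k]_\CC,[v_j]_\CC}_\CC$. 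These pin the correct frame bound at $A/2$ (so that $\trace({A\over2}I_{2d})=dA$) and identify the matrix $P:=V_1^*V_1+V_2^T\overline{V_2}$ occurring in (iii)--(iv) as exactly the Gramian of $V_\CC$. Throughout I assume not all $v_j$ vanish, so $A>0$, matching the hypothesis of Theorem \ref{generalisedWelchbound}.

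For (i)$\iff$(ii) I would compute the block frame operator of $V_\CC=\pmat{V_1\cr\overline{V_2}}$ directly, obtaining
$$ V_\CC V_\CC^* = \pmat{V_1V_1^* & V_1V_2^T\cr \overline{V_2}V_1^* & \overline{V_2}V_2^T}, $$
with $V_\CC$ a tight frame for $\CC^{2d}$ exactly when this equals ${A\over2}I_{2d}$. The only care needed is the conjugation bookkeeping forced by $jz=\overline z j$: using $\overline{V_2}V_2^T=\overline{V_2V_2^*}$ and $\overline{V_2}V_1^*=\overline{V_2V_1^T}$ together with the reality of $A$, the four block equations collapse to precisely $V_1V_1^*=V_2V_2^*={1\over2}AI$ and $V_1V_2^T=V_2V_1^T=0$, which is (ii).

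For (i)$\iff$(iii) and (i)$\iff$(iv) I would invoke the complex theory applied to $V_\CC$. For (iii): $V_\CC$ is tight for $\CC^{2d}$ iff the scaled frame $\sqrt{2/A}\,V_\CC$ is normalised tight, iff (Proposition \ref{tightframeequidefs}) its Gramian ${2\over A}P$ is a rank-$2d$ orthogonal projection; as $P$ is automatically Hermitian this reduces to $\bigl({2\over A}P\bigr)^2={2\over A}P$, i.e. $P^2={A\over2}P$, which is (iii). The rank is not an extra hypothesis: $P^2={A\over2}P$ already makes ${2\over A}P$ idempotent and Hermitian, hence an orthogonal projection of rank $\trace({2\over A}P)={2\over A}\cdot dA=2d$, the full dimension, so $V_\CC$ indeed spans $\CC^{2d}$. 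For (iv): the bound of Theorem \ref{generalisedWelchbound} applied to $V_\CC$ in $\CC^{2d}$ gives $\norm{V_\CC^*V_\CC}_F^2\ge{1\over2d}(\trace(V_\CC^*V_\CC))^2$ with equality iff $V_\CC$ is tight; substituting $\norm{V_\CC^*V_\CC}_F^2=\norm{\Co_1(V^*V)}_F^2=\sum_{j,k}|\Co_1(\inpro{v_j,v_k})|^2$ and $\trace(V_\CC^*V_\CC)=\sum_j\norm{v_j}^2$ converts this equality into exactly (iv). I do not expect a genuine obstacle here: once $V_\CC^*V_\CC=\Co_1(V^*V)$ and the trace identity are in hand, everything is an application of the complex results, and the only places demanding attention are the conjugation bookkeeping in the (i)$\iff$(ii) block computation and the observation that the trace supplies the rank in (iii) for free.
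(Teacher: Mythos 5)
Your proposal is correct and follows essentially the same route as the paper: (i)$\iff$(ii) by the block computation of $V_\CC V_\CC^*$, (i)$\iff$(iii) via the Gramian $V_\CC^*V_\CC=V_1^*V_1+V_2^T\overline{V_2}$ being a multiple of an orthogonal projection (Proposition \ref{tightframeequidefs}), and (i)$\iff$(iv) via the variational characterisation (Theorem \ref{generalisedWelchbound}) together with $\inpro{[v]_\CC,[w]_\CC}_\CC=\Co_1(\inpro{v,w}_\HH)$. The only difference is that you spell out details the paper leaves implicit --- the conjugation bookkeeping in the block equations, and the observation that $\trace(P)=dA$ forces the rank $2d$ in (iii) --- which is a welcome tightening rather than a departure.
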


\begin{proof} 
In terms of the frame operator, 
the condition (i) is
$$ \pmat{V_1\cr \overline{V_2}} \pmat{V_1\cr \overline{V_2}}^*
= \pmat{V_1\cr \overline{V_2}} \pmat{V_1^* & V_2^T}^*
= \pmat{V_1V_1^* & V_1V_2^T\cr\overline{V_2}V_1^*&\overline{V_2}V_2^T}
= {1\over2}A\pmat{I&0\cr0&I}, $$
where $dA=\sum_j\norm{v_j}^2$, which is clearly equivalent to (ii).

In terms of the Gramian $V_\CC^*V_\CC =V_1^* V_1+V_2^T\overline{V_2}$ 
being (a multiple of) an orthogonal projection matrix 
(Proposition 
\ref{tightframeequidefs}), 
the condition (i) is (iii).

In terms of the variational characterisation 
(Theorem \ref{generalisedWelchbound}), the condition (i) is
$$ \sum_j \sum_k |\inpro{[v_j]_\CC,[v_k]_\CC}|^2 = {1\over 2d}\sum_j\Bigl(\norm{[v_j]_\CC}^2\Bigr)^2, $$
which can be written as (iv), since $\inpro{[v]_\CC,[w]_\CC}_\CC=\Co_1(\inpro{v,w}_\HH)$
and $\norm{[v]_\CC}=\norm{v}_\HH$.
\end{proof}

We observe that condition the (iv) depends only on $V$ up to unitary equivalence,
and so the others do also.

\begin{theorem} 
\label{tightframesCtoH}
Let $[\cdot]_\CC:\Hd\to\CC^{2d}$ be the correspondence (\ref{HdC2dcorrespondence}) between
$\Hd$ and $\CC^{2d}$. Then 
\begin{enumerate}
\item Tight frames for $\CC^{2d}$ correspond to tight frames for $\Hd$.
\item A tight frame $V=[v_1,\ldots,v_n]$ for $\Hd$ corresponds to a tight frame
for $\CC^{2d}$ if and only if it satisfies
\begin{equation}
\label{HtoCcdnII}
\sum_j\sum_k |\Co_1(\inpro{v_j,v_k})|^2
=\sum_j\sum_k |\Co_2(\inpro{v_j,v_k})|^2.
\end{equation}
\end{enumerate}
\end{theorem}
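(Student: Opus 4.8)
The plan is to reduce everything to Lemma \ref{tightframesCtoHlemma}, which already establishes that the correspondence $[\cdot]_\CC$ sends tight frames for $\CC^{2d}$ to tight frames for $\Hd$ and characterises when a tight frame for $\Hd$ lifts to one for $\CC^{2d}$ via the condition (iv), namely $\sum_j\sum_k |\Co_1(\inpro{v_j,v_k})|^2 = {1\over 2d}(\sum_j\norm{v_j}^2)^2$. Part 1 of the theorem is then immediate from part (i)$\Leftrightarrow$(ii) of the lemma applied in the reverse direction: given a tight frame $V_\CC$ for $\CC^{2d}$, its preimage $V$ is a tight frame for $\Hd$, so I would simply cite the lemma. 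The real content is part 2, where I must show that condition (iv) of the lemma is equivalent to the symmetric-looking condition (\ref{HtoCcdnII}).

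The key algebraic identity I would exploit is the norm decomposition $|q|^2 = |\Co_1(q)|^2 + |\Co_2(q)|^2$ for any quaternion $q$, applied to $q = \inpro{v_j,v_k}$. Summing over all $j,k$ gives
$$ \sum_j\sum_k |\inpro{v_j,v_k}|^2 = \sum_j\sum_k |\Co_1(\inpro{v_j,v_k})|^2 + \sum_j\sum_k |\Co_2(\inpro{v_j,v_k})|^2. $$
First I would observe that $V$ is already assumed to be a tight frame for $\Hd$, so by Theorem \ref{generalisedWelchbound} (the equality case) the left-hand side equals ${1\over d}(\sum_j\norm{v_j}^2)^2$. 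Substituting this into the decomposition, condition (iv)---that the first sum on the right equals ${1\over 2d}(\sum_j\norm{v_j}^2)^2$, i.e.\ exactly half the total---holds if and only if the first and second sums are equal, which is precisely (\ref{HtoCcdnII}). This is the crux, and it mirrors the structure of the proof of Theorem \ref{tightframesRtoC}, where the analogous splitting of $|z|^2$ into real and imaginary parts yielded (\ref{CtoRcdnIIextra}).

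Concretely, the steps in order are: (1) invoke Lemma \ref{tightframesCtoHlemma} to get part 1 and to reduce part 2 to its condition (iv); (2) write down the quaternionic norm decomposition $|\inpro{v_j,v_k}|^2 = |\Co_1(\inpro{v_j,v_k})|^2 + |\Co_2(\inpro{v_j,v_k})|^2$ and sum it; (3) use that $V$ is a tight frame for $\Hd$ together with Theorem \ref{generalisedWelchbound} to evaluate the total sum as ${1\over d}(\sum_j\norm{v_j}^2)^2$; (4) conclude that (iv) is equivalent to the two half-sums being equal, which is (\ref{HtoCcdnII}).

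I expect no serious obstacle, since all the heavy lifting---relating the Gramian condition, the frame-operator condition, and the variational condition---has already been done in Lemma \ref{tightframesCtoHlemma}. The only point requiring mild care is that the reduction to ``first sum equals half the total'' uses in an essential way that $V$ is \emph{already} a tight frame for $\Hd$; without that hypothesis the total sum need not equal ${1\over d}(\sum_j\norm{v_j}^2)^2$ and the clean equivalence would fail. I would therefore state explicitly at the outset that $V$ is a tight frame for $\Hd$ (which is part of the hypothesis of statement 2, ``a tight frame $V$ for $\Hd$ corresponds\dots'') before applying the variational equality.
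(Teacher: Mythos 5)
Your proposal is correct and follows essentially the same route as the paper: both reduce part 2 to condition (iv) of Lemma \ref{tightframesCtoHlemma}, invoke the variational/Welch equality $\sum_j\sum_k|\inpro{v_j,v_k}|^2={1\over d}\bigl(\sum_j\norm{v_j}^2\bigr)^2$ for the tight frame on $\Hd$, and convert via $|q|^2=|\Co_1(q)|^2+|\Co_2(q)|^2$ into the statement that the two half-sums are equal. The only cosmetic difference is in part 1, where the paper writes out the one-line expansion $VV^*=(V_1V_1^*+V_2V_2^*)+(V_2V_1^T-V_1V_2^T)j=AI$ showing that the lemma's condition (ii) forces tightness over $\Hd$, a step you cite as immediate from the lemma.
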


\begin{proof} The sequence $V=V_1+V_2j$ is a tight frame for $\Hd$ if and only if
$$ VV^* = (V_1+V_2j) (V_1^*-V_2^Tj) 
= (V_1V_1^* +V_2V_2^*)+(V_2V_1^T -V_1V_2^T)j = AI,  $$
which is clearly satisfied if $V$ corresponds to a tight frame for $\CC^{2d}$
(by Lemma \ref{tightframesCtoHlemma}).

The variational characterisation for being a tight frame for $\Hd$ and for $\CC^{2d}$
are 
$$ \norm{V^*V}_F^2 = {1\over d}\Bigl(\sum_j\norm{v_j}^2\Bigr)^2, \qquad
\norm{\Co_1(V^*V)}_F^2 = {1\over 2d}
\Bigl(\sum_j\norm{v_j}^2\Bigr)^2.  $$
Hence, if $V$ gives a tight frame for $\Hd$, then it gives a tight frame
for $\CC^{2d}$ if and only if
$$ 2\norm{\Co_1(V^*V)}_F^2- \norm{V^*V}_F^2 =0. $$
Since $|\inpro{v_j,v_k}|^2=|\Co_1(\inpro{v_j,v_k})|^2 +|\Co_2(\inpro{v_j,v_k})|^2$,
this is (\ref{HtoCcdnII}).
\end{proof}

The conditions (\ref{CtoRcdnIIextra}) and (\ref{HtoCcdnII})
can be written insightfully as
$$ \norm{\Re(V^*V)}_F=\norm{\Im(V^*V)}_F, \qquad
\norm{\Co_1(V^*V)}_F=\norm{\Co_2(V^*V)}_F. $$

\begin{example} 
Let $V=[1,i,j,k]$, which is a tight frame for $\HH$.
The Gramian is
$$ V^*V
=\pmat{1&i&j&k \cr -i&1&-k&j \cr -j&k&1&-i \cr -k&-j&i&1 }
=\pmat{1&i&0&0 \cr -i&1&0&0 \cr 0&0&1&-i \cr 0&0&i&1 } 
+\pmat{0&0&1&i \cr 0&0&-i&1 \cr -1&i&0&0 \cr -i&-1&0&0 }j, $$
so this gives a tight frame for $\CC^2$, i.e., $W=[e_1,ie_1,e_2,ie_2]$, with Gramian
$$ W^*W=\pmat{ 1&i&0&0 \cr -i&1&0&0 \cr 0&0&1&i \cr 0&0&-i&1 }
=\pmat{ 1&0&0&0 \cr 0&1&0&0 \cr 0&0&1&0 \cr 0&0&0&1 }
+i\pmat{ 0&1&0&0 \cr -1&0&0&0 \cr 0&0&0&1 \cr 0&0&-1&0 }, $$
so that this in turn gives a tight frame for $\RR^4$, i.e., $[e_1,e_3,e_2,e_4]$.
\end{example}

\begin{example} Consider the Gramian of the SIC of four vectors in $\CC^2$
(Example \ref{HoggarlinesinC^2}).
The contribution to $\norm{V^*V}_F$ 
of the diagonal entries, 
which are all real, is $4$,
and for the off diagonal entries it is $12{1\over3}=4$.
Thus the SIC corresponds
to a tight frame for $\RR^4$ if and only if its vectors can be scaled so
that the off diagonal entries of the Gramian are pure imaginary.
This can in fact be done, e.g., take $V=[v,iSv,i\gO v,-S\gO v]$, to obtain 
$$ \pmat{\Re(V)\cr\Im(V)}
=\pmat{a&-b&0&b \cr b&0&b&-a\cr 0&b&a&b\cr b&a&-b&0},
\qquad a={\sqrt{3+\sqrt{3}}\over\sqrt{6}}, \
b={\sqrt{3-\sqrt{3}}\over2\sqrt{3}}. $$
This is an orthonormal basis, by Proposition \ref{tightframeequidefs},
or directly by using (\ref{CtoRinpro}). Hence there is a norm-preserving
(invertible) $\RR$-linear map $\CC^2\to\RR^4$ which maps the SIC to an
orthonormal basis.
\end{example}

We now summarise some basic results about
$[\cdot]_\FF$, $\FF=\RR,\CC$, and the associated linear maps, in a unified form.
We first observe that in the literature, there is some variation in the definitions,
in particular, the ordering of $[v]_\RR$ can be either of
$$ [v]_\RR =\pmat{\Re(v)\cr\Im(v)}, \qquad \pmat{\Re(v_1)\cr\Im(v_1)\cr\vdots\cr
\Re(v_d)\cr\Im(v_d)}, $$
and similarly for $[v]_\CC$. In the latter case (cf \cite{H76}, \cite{R14}
for $[v]_\CC$), the matrix representation
$[A]_\RR$ is then obtained by replacing the entry $a_{jk}$ of the matrix $A$ by the matrix
$$\pmat{\Re(a_{jk})&-\Im(a_{jk})\cr\Im(a_{jk})&\Re(a_{jk})}. $$
Our choice of the former was governed by the simpler formulas (cf \cite{C80}).
Indeed, with $L=A+iB,A+Bj$ (respectively), we have the explicit formulas
\begin{equation}
\label{singlematrep}
[L]_\FF = \pmat{A&-B\cr\overline{B}&\overline{A}}, \qquad
[L^*]_\FF = \pmat{A^*&B^T\cr -B^*&A^T}=[L]_\FF^*, \qquad
\FF=\RR,\CC.
\end{equation}

\begin{theorem} 
\label{mapspropspreserved}
The $\FF$-linear maps $[\cdot]_\FF$, $\FF=\RR,\CC$ given by
(\ref{CdR2dcorrespondence}) and (\ref{HdC2dcorrespondence})
have the following properties
\begin{enumerate}[\rm(a)]
\item They map $r$-dimensional subspaces to $(2r)$-dimensional subspaces.
\item They preserve the Euclidean norm of a vector.
\item They map orthogonal vectors to orthogonal vectors.
\item They map tight frames 
satisfying (\ref{CtoRcdnIIextra}) and (\ref{HtoCcdnII}), respectively,
to tight frames.
\item They map equi-isoclinic $r$-subspaces to equi-isoclinic $(2r)$-subspaces.
\item They map equichordal $r$-subspaces to equichordal $(2r)$-subspaces.
\end{enumerate}
Moreover, the associated $\FF$-linear maps $L\mapsto[L]_\FF$ to matrices over $\FF$ 
satisfy
\begin{enumerate}[\rm(i)]
\item $[AB]_\FF=[A]_\FF[B]_\FF$, 
$[\gl A]_\FF=\gl[A]_\FF$, $\gl\in\RR$, 
and $[A^*]_\FF=[A]_\FF^*$.
\item They map rank $r$ linear maps to rank $2r$ linear maps.
\item They map invertible linear maps to invertible linear maps,
with $[A^{-1}]_\FF=[A]_\FF^{-1}$. 
\item They map self adjoint operators to self adjoint operators.
\item They map unitary operators to unitary operators.
\item They map orthogonal projections to orthogonal projections, 
and in particular the identity to the identity.
\end{enumerate}
\end{theorem}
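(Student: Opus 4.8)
The plan is to prove the matrix-map properties (i)--(vi) first, since they drive everything, and then read off the vector/subspace properties (a)--(f). Almost all the ingredients are already assembled in the preceding two sections, so the work is mainly a matter of organising them. For the engine (i), I would use that the matrix map is the conjugation $[L]_\FF=[\cdot]_\FF\,L\,[\cdot]_\FF^{-1}$ by the fixed invertible $\FF'$-linear isomorphism $[\cdot]_\FF$ (with $\FF'=\RR,\CC$ in the two cases). Functoriality of conjugation then yields $[AB]_\FF=[A]_\FF[B]_\FF$ and $[I]_\FF=I$ at once, while real-homogeneity of $[\cdot]_\FF$ gives $[\gl A]_\FF=\gl[A]_\FF$ for $\gl\in\RR$ (the block formula (\ref{singlematrep}) shows why $\gl$ must be real). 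The last clause $[A^*]_\FF=[A]_\FF^*$ is exactly the explicit identity recorded in (\ref{singlematrep}), already checked in both cases, and the rank-doubling (ii), $\rank([A]_\FF)=2\rank(A)$, was noted beside each block formula.

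The operator items are then immediate corollaries of (i). From $[A]_\FF[A^{-1}]_\FF=[I]_\FF=I$ I get (iii), invertibility together with $[A^{-1}]_\FF=[A]_\FF^{-1}$; and applying $[\cdot]_\FF$ to the defining identities $A^*=A$, $U^*U=I$, and $P^2=P=P^*$ gives (iv), (v), and (vi) respectively, with $[I]_\FF=I$ the stated special case. For the geometric items, property (b) follows from the inner-product relations already derived: (\ref{CtoRinpro}) with $w=v$ gives $\norm{[v]_\RR}^2=\Re\inpro{v,v}_\CC=\norm{v}^2$, and the quaternionic analogue uses $|q|^2=|\Co_1(q)|^2+|\Co_2(q)|^2$ entrywise together with (\ref{HtoCinpro}); property (c) is the same relations applied when $\inpro{v,w}=0$, so that the real part, hence the image inner product, vanishes.

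For (a), rather than count a basis, I would note that if $P_W$ is the orthogonal projection onto an $r$-subspace $W$, then $[P_W]_\FF$ is an orthogonal projection (by (vi)) of rank $2r$ (by (ii)), and, since the matrix map is conjugation, $\ran[P_W]_\FF=[\,\ran P_W\,]_\FF=[W]_\FF$; hence the image subspace $[W]_\FF$ has dimension $2r$. Property (d) is nothing but Theorems \ref{tightframesRtoC} and \ref{tightframesCtoH}.

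The two items needing a little more are (e) and (f), and they reuse the identification $\ran[P_j]_\FF=[V_j]_\FF$ from (a). Writing $Q_j:=[P_j]_\FF$ for the projections onto the image subspaces, property (e) follows by applying (i) to the isoclinic condition (\ref{isoclinicequivcdn}): $Q_jQ_kQ_j=[P_jP_kP_j]_\FF=[\gl P_j]_\FF=\gl Q_j$, so the $2r$-subspaces are equi-isoclinic with the same $\gl$. For (f) the one extra fact is the trace identity $\trace([L]_\FF)=2\Re(\trace L)$, read straight off the block form (\ref{singlematrep}); combined with (i) it gives $\trace(Q_jQ_k)=\trace([P_jP_k]_\FF)=2\Re(\trace(P_jP_k))=2\gl r=\gl(2r)$, which is real, so $\Re(\trace(Q_jQ_k))=\gl(2r)$ and the images are equichordal. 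I do not expect a genuine obstacle: the only place quaternionic noncommutativity could interfere is the homomorphism property (i), but that was already secured when the block form of $[L]_\CC$ was derived via $jA=\overline{A}j$, so the remaining effort is careful bookkeeping rather than a new idea.
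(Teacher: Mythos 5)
Your overall strategy---establish the matrix-map properties via the conjugation identity $[L]_\FF=[\cdot]_\FF\,L\,[\cdot]_\FF^{-1}$ and then read off the geometric statements---is in substance the same assembly that the paper performs: the paper likewise gets (iii) from (i), (d) from Theorems \ref{tightframesRtoC} and \ref{tightframesCtoH}, (b)--(c) from (\ref{CtoRinpro}) and (\ref{HtoCinpro}), and (e)--(f) from the definitions (\ref{isoclinicequivcdn}) and (\ref{equichordalHdefn}) together with facts (i), (ii), (vi). Your spelled-out computations for (e) and (f), in particular the trace identity $\trace([L]_\FF)=2\Re(\trace(L))$ read off the block form (\ref{singlematrep}), usefully fill in details the paper leaves implicit, and deriving (iv)--(vi) from (i) rather than by direct block-matrix calculation is a legitimate, arguably cleaner, variant.

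The one genuine problem is your treatment of (a) and (ii), where you invert the paper's logical order and, as written, create a circle. The paper proves (a) \emph{directly}: it is the observation in the text, via the scalar-image formulas (\ref{Csubspaceimage}) and (\ref{Hsubspaceimage}) and the correspondence of linear dependencies, that $k$-dimensional subspaces map to $2k$-dimensional subspaces; item (ii) is then deduced from (a) using $\ker([L]_\FF)=[\ker(L)]_\FF$ and rank--nullity. You go the other way: you deduce (a) from (ii) (plus (vi) and $\ran([P_W]_\FF)=[W]_\FF$), but your only support for (ii) is that rank-doubling ``was noted beside each block formula.'' That remark in the paper is a bare assertion, and the only justification for it available in the paper's development is precisely (a); so your (a) rests on (ii), which rests on (a). The repair is easy: either prove (a) first, as the paper does, from the linear-dependency correspondence (after which your range argument becomes a second proof of (a) rather than the primary one), or give an independent proof of rank-doubling from the block structure, e.g.\ for $\FF=\RR$ note that over $\CC$ the matrix $[A]_\RR$ is similar to $\pmat{A&0\cr 0&\overline{A}}$, with a corresponding argument needed for $[L]_\CC$ in the quaternionic case. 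With that single dependency straightened out, your proof is correct.
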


\begin{proof} 
For the first part, (a) has already been observed,
(b) and (c) follow directly from
(\ref{CtoRinpro}) and (\ref{HtoCinpro}), 
(d) follows from Theorems 
\ref{tightframesRtoC}
and \ref{tightframesCtoH}, 
and (e) and (f) follow from the definitions
(\ref{isoclinicequivcdn}) and
(\ref{equichordalHdefn}), and the facts (i), (ii), (vi).

Now the second part. The first part of (i) follows from the definition,
and the second part was a calculation that we did in each case.
For (ii), we have $\ker([L]_\FF)=[\ker(L)]_\FF$, and the result follows
from (a), with (iii) being a special case. If $A$ is invertible, then
(i) gives $I=[I]_\FF=[AA^{-1}]_\FF=[A]_\FF[A^{-1}]_\FF$, which gives the
formula for the inverse. The properties (iv), (v) and (vi) are straightforward
calculations using (\ref{singlematrep}).
\end{proof}

\begin{example} From the observation 
$$ j(A_1+A_2 j) = (\overline{A_1}+\overline{A_2}j)j, \qquad
A_1,A_2\in\CC^{m\times n}, $$
it follows that the image of the $m\times n$ matrices over $\HH$ is
$$ [\HH^{m\times n}]_\CC = \{A\in\CC^{2m\times 2n}:
J_m A=\overline{A}J_n\},\qquad
J_\ell:=[jI_\ell]_\CC=\pmat{0&-I_\ell\cr I_\ell&0}. $$
\end{example}

\begin{example}
\label{quaternionsgroupframe}
If $G$ is a group of $d\times d$ matrices over $\CC$ or $\HH$,
then it follows from Theorem \ref{mapspropspreserved} that
$[G]_\FF=\{[g]_\FF:g\in G\}$ is an isomorphic group of
$(2d)\times(2d)$ matrices.
As an example, the quaternions $Q_8=\{\pm1,\pm i,\pm j,\pm k\}$
are generated by $i$ and $j$, and so the groups of unitary matrices
$[Q_8]_\CC$ and $[[Q_8]_\CC]_\RR$ are generated by
$$ [i]_\CC= \pmat{i&0\cr0&-i}, \qquad
[j]_\CC=\pmat{0&-1\cr1&0}, $$
$$ [[i]_\CC]_\RR= \pmat{0&0&-1&0 \cr 0&0&0&1\cr 1&0&0&0\cr 0&-1&0&0}, \qquad
 [[j]_\CC]_\RR= \pmat{0&-1&0&0 \cr 1&0&0&0\cr 0&0&0&-1\cr 0&0&1&0}, \qquad $$
respectively. These 
representations of $Q_8$ are well known.
\end{example}

\begin{example}
If $V=[v_1,\ldots,v_n]\in\HH^{d\times n}$ gives a tight frame of $n$ vectors for 
$\Hd$, i.e., $VV^*=AI$, then 
$$ [V]_\CC=\bigl[ [v_1]_\CC,\ldots,[v_n]_\CC,[v_1j]_\CC,\ldots,[v_nj]_\CC\bigr]$$
gives a tight frame of $2n$ vectors for $\CC^{2d}$.
\end{example}

The equiangular lines in $\HH^2$ of \cite{B20} were obtained
by considering equi-isoclinic planes in $\CC^4$. We now 
explain the mechanism.

\begin{example}
Associated with a unit vector $v_a\in\Hd$, we have
$$V_a:=[[v_a]_\CC,[v_aj]_\CC]\in\CC^{2d\times2}, $$ 
with orthonormal columns
which span a plane in $\CC^{2d}$. 
The entries of the ``block Gramian'' for $V=[V_1,\ldots,V_n]$ are
$V_a^*V_b$ (with $V_a^*V_a=I$).
These satisfy
\begin{equation}
\label{hardtoproveactually}
 (V_a^*V_b)^*(V_a^*V_b) =
\pmat{|\inpro{v_a,v_b}_\HH|^2 & 0 \cr 0&|\inpro{v_a,v_b}_\HH|^2},
\end{equation}
so that
$$ |\inpro{v_a,v_b}|^2=\gl \Iff (V_a^*V_b)^*(V_a^*V_b) = \gl I. $$
Thus $(v_a)$ gives a set of equiangular lines in $\Hd$ if and only if 
the off diagonal entries of the block Gramian $[V_1,\ldots,V_n]^*[V_1,\ldots,V_n]$
are unitary matrices, up to a fixed scalar.
An $n\times n$ block matrix with this structural form ($2\times2$ blocks,
positive semi-definite of rank $2d$), which corresponds to
equi-isoclinic planes in $\CC^{2d}$,  can then be mapped back (under
$[\cdot]_\CC^{-1}$) to the Gramian of $n$ equiangular lines in $\Hd$,
see Theorem 13, \cite{B20}.
\end{example}

The equation (\ref{hardtoproveactually}) follows by a direct calculation, e.g.,
using (\ref{HtoCinpro}), we have
\begin{align*}
(V_b^*V_aV_a^*V_b)_{11}
&= [v_b]_\CC^*[v_a]_\CC[v_a]_\CC^*[v_b]_\CC +[v_b]_\CC^*[v_aj]_\CC[v_aj]_\CC^*[v_b]_\CC \cr
&= \inpro{ [v_b]_\CC , [v_a]_\CC }_\CC \inpro{ [v_a]_\CC , [v_b]_\CC }_\CC
+\inpro{ [v_b]_\CC , [v_aj]_\CC }_\CC \inpro{ [v_aj]_\CC , [v_b]_\CC }_\CC \cr
&= |\Co_1(\inpro{v_a,v_b}_\HH)|^2
+|\Co_2(\inpro{v_a,v_b}_\HH)|^2=|\inpro{v_a,v_b}_\HH|^2,
\end{align*}
\begin{align*}
(V_b^*V_aV_a^*V_b)_{12}
&= [v_b]_\CC^*[v_a]_\CC[v_a]_\CC^*[v_bj]_\CC +[v_b]_\CC^*[v_aj]_\CC[v_aj]_\CC^*[v_bj]_\CC\cr
&= 
\inpro{ [v_b]_\CC , [v_a]_\CC }_\CC \inpro{ [v_a]_\CC , [v_bj]_\CC }_\CC
+\inpro{ [v_b]_\CC , [v_aj]_\CC }_\CC \inpro{ [v_aj]_\CC , [v_bj]_\CC }_\CC \cr
&= \Co_1(\inpro{v_b,v_a}_\HH) (-\Co_2(\inpro{v_a,v_b}_\HH))
-\Co_2(\inpro{v_b,v_a}_\HH) \Co_1(\inpro{v_aj,v_bj}_\HH) \cr
&= -\Co_1(\inpro{v_b,v_a}_\HH)\Co_2(\inpro{v_a,v_b}_\HH)
	+\Co_2(\inpro{v_b,v_a}_\HH) \Co_1(\inpro{v_b,v_a}_\HH)=0, \cr
\end{align*}
where in the second to last equality we used $\Co_2(\overline{q})=-\Co_2(q)$, $q\in\HH$.


Here is a construction of equiangular lines 
going in the opposite direction.

\begin{example}
We consider the construction of $64$ equiangular lines in $\CC^8$ by 
\cite{H98}. These were obtained by finding $64$ unit vectors in $\HH^4$ 
with angles ${1\over 9},{1\over3}$ (as vertices of a quaternionic polytope).
These were then mapped by $[\cdot]_\CC$ to $64$ equiangular vectors in $\CC^8$.
We note that for $v,w\in\Hd$, $\ga\in\HH$, (\ref{HtoCinpro}) gives
\begin{align*}\inpro{[v\ga]_\CC,[w]_\CC}_\CC
&= \Co_1(\inpro{v\ga,w}_\HH)= \Co_1(\inpro{v,w}_\HH\ga) \cr
&= \Co_1(\ga) \Co_1(\inpro{v,w}_\HH)
- \Co_2(\ga) \overline{\Co_2(\inpro{v,w}_\HH)},
\end{align*}
so that multiplying vectors in $\Hd$ by noncomplex unit scalars in $\HH$
can change the angle between their images in $\CC^{2d}$. 
\end{example}

\section{Group frames and $G$-matrices}

Many tight frames of interest are the orbit of one or more vectors
under the unitary action of a group, e.g., the Weyl-Heisenberg SICs. 
There is a well developed 
theory of such group frames based in the theory of group representations
(over $\RR$ and $\CC$)
\cite{VW05}, \cite{W13}, \cite{VW16}, \cite{W18}. 
We now give an indication of how this theory extends to representations
over $\HH$ (see \cite{SS95}).

A {\bf representation} of a finite abstract group $G$ on
$\Hd$ is a group homomorphism $\rho:G\to\GL(\Hd)$ from $G$ to the 
invertible $d\times d$ matrices over $\HH$, 
with equivalence defined in the usual way. 
We will consider only {\bf unitary representations}, i.e.. those where
the matrices $\rho(g)$ are unitary.
For these, we will write the unitary action
as $gv:=\rho(g)v$, and we note that $g^*v=g^{-1}v$.
A frame (sequence of vectors) of the form $(gv)_{g\in G}$ 
is said to be a {\bf group frame} (or {\bf $G$-frame}) \cite{W20a}. 
The frame operator of a group frame $(gv)_{g\in G}$ commutes with the 
frame operator, i.e.,
\begin{equation}
\label{SandGcommute}
S(hv) =\sum_{g\in G} gv\inpro{ gv , hv }
=h\sum_{g\in G} h^{-1}gv\inpro{ h^{-1}gv , v }
=h S(v), \quad h\in G, \ v\in\Hd.
\end{equation}
The Gramian of a group matrix has entries of the form
$$ \inpro{hv,gv}=\inpro{g^{-1}hv,v}. $$
A matrix $A=[a_{gh}]_{g,h\in G}\in \HH^{G\times G}$ is a 
{\bf $G$-matrix} (or {\bf group matrix}) if there exists a function
$\nu:G\to\HH$ such that
$$ a_{gh}=\nu(g^{-1}h), \qquad\forall g,h\in G. $$
The Gramian of a $G$-frame is a $G$-matrix, and conversely
if the Gramian of a frame $(v_g)_{g\in G}$ with vectors indexed by $G$
is a $G$-matrix, then it is a $G$-frame (adapt the proof of \cite{W18} Theorem 10.3).
An action (representation) of $G$ on $\Hd$ is {\bf irreducible} if the only $G$-invariant subspaces 
of $\Hd$ are $0$ and $\Hd$, i.e., $\spam_\HH\{gv\}_{g\in G}=\Hd$, for all $v\ne0$.

The theory of $G$-frames
for real and complex actions 
begins with irreducible actions, where it takes its simplest form.
This extends without issue.

\begin{proposition}
Suppose that a unitary action of a group $G$ on $\Hd$ is
irreducible. Then $(gv)_{g\in G}$ is a tight $G$-frame for $\Hd$ for 
any $v\ne 0$, i.e.,
$$ x={d\over |G|} {1\over\norm{v}^2}\sum_{g\in G} gv\inpro{gv,x}, \qquad
\forall x\in\Hd. $$
\end{proposition}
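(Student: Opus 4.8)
The plan is to follow the classical strategy for group frames---show the frame operator is a scalar multiple of the identity via an irreducibility argument, then pin down the scalar by a trace computation---while taking care at the two places where quaternionic noncommutativity intervenes.

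First I would set $V=[gv]_{g\in G}$ and $S=VV^*=\sum_{g\in G}(gv)(gv)^*$, so that $Sx=\sum_{g\in G}gv\inpro{x,gv}$ and the claimed identity is precisely $S=\frac{|G|}{d}\norm{v}^2 I$. The operator $S$ is Hermitian, since $S^*=(VV^*)^*=S$, and positive definite: for $x\ne0$, irreducibility gives some $g$ with $\inpro{x,gv}\ne0$ (otherwise $x\perp\spam_\HH\{gv\}_{g\in G}=\Hd$), so $\inpro{Sx,x}=\sum_g|\inpro{x,gv}|^2>0$. Moreover the computation (\ref{SandGcommute}) already establishes that $S$ commutes with the action, i.e.\ $S\rho(h)=\rho(h)S$ for every $h\in G$.

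The heart of the argument is to deduce $S=\gl I$ from irreducibility, and here I would \emph{avoid} invoking a quaternionic Schur's lemma directly---over $\HH$ the commutant of an irreducible representation can be $\RR$, $\CC$ or $\HH$ rather than the scalars---and instead exploit that $S$ is Hermitian. By the spectral theory for Hermitian quaternionic matrices noted above, $S$ is unitarily diagonalisable with \emph{real} eigenvalues; let $\gl$ be an eigenvalue and $E_\gl=\{x:Sx=x\gl\}$ its eigenspace. Because $\gl$ is real it is central, so $x\gl=\gl x$, and the $\HH$-linearity of each $\rho(g)$ gives $S(\rho(g)x)=\rho(g)(Sx)=\rho(g)(x\gl)=(\rho(g)x)\gl$; thus $E_\gl$ is a $G$-invariant right $\HH$-subspace. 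Irreducibility forces $E_\gl\in\{0,\Hd\}$, and since $S$ is positive definite every eigenvalue is positive, so some $E_\gl$ is nonzero and hence equals $\Hd$. Distinct real eigenvalues have intersecting-in-zero eigenspaces, so there is only one eigenvalue and $S=\gl I$ with $\gl>0$.

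Finally I would compute $\gl$ by taking the (real) trace. From $S=\gl I$ we get $\trace(S)=\gl d$, while (\ref{traceSandSsquared}) gives $\trace(S)=\sum_{g\in G}\norm{gv}^2=|G|\,\norm{v}^2$, the last equality because each $\rho(g)$ is unitary and therefore norm-preserving. Hence $\gl=\frac{|G|}{d}\norm{v}^2$, and rearranging $Sx=\gl x$ into $x=\gl^{-1}Sx=\frac{d}{|G|}\frac{1}{\norm{v}^2}\sum_{g\in G}gv\inpro{x,gv}$ yields the stated frame expansion (so $(gv)_{g\in G}$ is tight by Proposition \ref{tightframeequidefs}). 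The main obstacle is exactly the middle step: the reality of the eigenvalues of the Hermitian $S$ is what guarantees the eigenspaces are honest $G$-invariant subspaces, and this must be used in place of the complex Schur argument.
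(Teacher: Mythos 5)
Your proof is correct and follows essentially the same route as the paper's: both use the commutation relation (\ref{SandGcommute}) together with the Hermitian spectral theory of quaternionic matrices to produce a real eigenvalue $\gl>0$ whose eigenspace is $G$-invariant (the paper phrases this as the orbit $(gw)_{g\in G}$ of an eigenvector spanning $\Hd$, which is the same argument), forcing $S=\gl I$ by irreducibility, and then both pin down $\gl$ by the identical trace computation $\trace(S)=|G|\,\norm{v}^2=d\gl$. Your extra care in noting that the reality of $\gl$ is what makes $E_\gl$ a genuine right $\HH$-subspace is a point the paper leaves implicit, but it is the same proof.
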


\begin{proof} Fix $v\ne0$, and let $S$ be the frame operator of $(gv)_{g\in G}$.
Since $S$ is nonzero and positive semidefinite, it has an eigenvalue $\gl>0$, 
with corresponding eigenvector $w$. By (\ref{SandGcommute}), 
$S$ commutes with the action of $g\in G$, so that 
$$ S(gw)=g(Sw)=g(w\gl)=(gw)\gl, $$
so that $gw$ is an eigenvector for $\gl$. But $(gw)_{g\in G}$ spans $\Hd$, 
so that $S=\gl I$, i.e., $(gv)_{g\in G}$ is a tight frame.
Since $S$ is Hermitian, taking the trace gives
$$ \trace(S)=\Re(\trace(S)) =\sum_g \norm{gv}^2=|G|\, \norm{v}^2=\trace(\gl I)= d\gl, $$
which gives the value of $\gl$.
\end{proof}

The general theory \cite{VW16}, \cite{W18}, which allows for multiple orbits, 
involves the decomposition of the vector space into irreducible $G$-invariant subspaces.

\begin{example}
Each finite subgroup of $\HH^*$ corresponds to a (faithful) irreducible action
on $\HH^1$. These subgroups were classified by Stringham \cite{S81}. 
They are the infinite families of cyclic groups (generated by the $n$-th roots of unity)
and binary dihedral groups, together with the binary tetrahedral, octahedral and icosahedral groups.
\end{example}

\begin{example}
The group generated by the matrices
$$ \pmat{0&1\cr1&0}, \quad
\pmat{1&0\cr0&i},  \quad
\pmat{1&0\cr0&j}, $$
has an irreducible unitary action on $\HH^2$.
It consists of all $128$ invertible matrices with two zero entries and two entries in $Q_8$.
It contains the scalar matrices from $Q_8$ and its center is $\pm I$.
Thus each orbit can be viewed as $16$ lines in $\HH^2$ (as a left vector space).
This is an example of a (quaternionic) reflection group, i.e., 
a finite group generated by reflections (linear maps which act 
as the identity on a hyperplane). 
The finite irreducible quaternionic reflection groups
have been classified (up to conjugacy) by Cohen \cite{C80}.
\end{example}

It is expected that the highly symmetric tight frames of \cite{BW13} corresponding
to complex reflection groups could be extended to the quaternionic reflection groups.
In this regard we note the regular quaternionic polytopes have been classified by 
\cite{C95}.

For $G$ abelian, there are a finite number of tight $G$-frames (called harmonic frames)
that can be obtained by ``taking rows of the character table''
(see \cite{VW05}, \cite{CW11}). We now give an example to show how this can
be extended to the quaternionic setting.

\begin{example}
\label{Hharmonicframes}
 (Quaternionic harmonic frames).
The irreducible representations over $\CC$ for an abelian group $G$ are all one-dimensional
(this characterises abelian groups), and these ``rows'' of the character table 
are orthogonal, so by taking a set of rows of the character table one obtains a
tight $G$-frame. Consider the quaternion group $G=Q_8$. This has four $1$-dimensional
and one $2$-dimensional irreducible representations over $\CC$.
The $2$-dimensional absolutely irreducible representation splits
into four $1$-dimensional representations over $\HH$, corresponding to the 
outer automorphisms of the quaternions. In this way, one obtains a character table
$$
\begin{array}{lcccccccc}
q\in Q_8 & 1 & -1 & i & -i & j & -j & k & -k \\
\hline
\chi_1 & 1 & 1 & 1 & 1 & 1 & 1 & 1 & 1 \\
\chi_2 & 1 & 1 & 1 & 1 & -1 & -1 & -1 & -1 \\
\chi_3 & 1 & 1 & -1 & -1 & 1 & 1 & -1 & -1 \\
\chi_4 & 1 & 1 & -1 & -1 & -1 & -1 & 1 & 1 \\
\chi_5 & 1 & -1 & i & -i & j & -j & k & -k \\
\chi_6 & 1 & -1 & j & -j & i & -i & -k & k \\
\chi_7 & 1 & -1 & -i & i & k & -k & j & -j \\
\chi_8 & 1 & -1 & k & -k & -i & i & -j & j 
\end{array}
$$
where the rows are orthogonal (cf \cite{SS95}). Taking rows gives a $G$-frame. 
The columns of the character table are also orthogonal, so taking columns also 
gives a tight frame, but these are not $G$-frames, in general (as follows 
for abelian groups by Pontryagin duality).
As an example, the frame obtained by taking the characters $\chi_1$ and $\chi_5$ (rows $1$ and
$5$ of the character table) gives a (unit-norm) tight $Q_8$-frame for $\HH^2$, 
with the inner products $\{1\pm 1,1\pm i, 1\pm j,1\pm k\}$
occurring exactly once in every row (column) of the Gramian. 
This frame has two angles: each vector is orthogonal to one other, and makes a fixed angle 
with all the others.
\end{example}

\section{Projective unitary equivalence}
\label{projectequivsect}

Finally, we consider the equivalence of vectors thought of as lines in $\Hd$. 
Here the noncommutativity of scalar multiplication considerably complicates 
the theory.

We say that two sequences of vectors $(v_j)$ and $(w_j)$ in $\Hd$ are 
{\bf projectively unitarily equivalent} if there exists a unitary $U$ and unit norm scalars
$\ga_j$ with
$$ w_j = (Uv_j)\ga_j, \qquad\forall j. $$
Clearly, projective unitary equivalence is an equivalence relation. 
Moreover, one can define a {\bf projective unitary symmetry group} of $(v_j)_{j\in J}$
to be all the permutations $\gs:J\to J$ for which $(v_j)$ and $(v_{\gs j})$ are 
projectively unitarily equivalent (cf \cite{CW18}).

To make a workable theory, one now needs a way to recognise projective unitary equivalence.
In terms of the Gramians $V=[v_j]$ and $W=[w_j]$, the formal definition says that
$$ W^*W = C^*V^*U^* UVC= C^* V^*V C, \qquad C:=\diag(\ga_j), $$
i.e.,
\begin{equation}
\label{projunitequivGramian}
\inpro{w_j,w_k}=\overline{\ga_k}\inpro{v_j,v_k}\ga_j.
\end{equation}
This leads to a ``linear system'' $C (W^*W) = (V^*V) C$ in the scalars $\ga_j$. However,
due to the noncommutativity of the quaternions, this can not be solved by Gauss elimination,
unless one 
first converts it to a linear system over $\RR$ (in the coordinates of the $\ga_j$).
What is usually done in the real and complex cases is to consider a
collection of invariants: the $m$-products, which completely 
characterise  projective unitary equivalence \cite{CW16}.
We now look at
the analogue of these (also see \cite{KMW19} for subspaces of $\Cd$).

For a sequence of vectors $(v_j)$ in $\Hd$ the {\bf $m$-products} are
$$ \gD(v_{j_1},v_{j_2},\ldots,v_{j_m})
:=
\inpro{v_{j_1},v_{j_2}}
\inpro{v_{j_2},v_{j_3}}
\inpro{v_{j_3},v_{j_4}}
\cdots \inpro{v_{j_m},v_{j_1}}\in\HH. $$
The $1$-products and $2$-products are clearly projective unitary invariants, since
$$ \gD(v_j)=\norm{v_j}^2, \qquad \gD(v_j,v_k)=|\inpro{v_j,v_k}|^2. $$
From these, we can define the {\bf frame graph} of $(v_j)$ to be the graph with
vertices $\{v_j\}$ and
an edge between $v_j$ and $v_k$ ($j\ne k$) if and only if 
$\inpro{v_j,v_k}\ne 0$.

Further, since
\begin{align*}
\gD\bigl((U &v_{j_1})\ga_{j_1},(Uv_{j_2})\ga_{j_2},\ldots,(Uv_{j_m})\ga_{j_m}\bigr) \cr
&= 
\inpro{(Uv_{j_1})\ga_{j_1},(Uv_{j_2})\ga_{j_2}} 
\inpro{(Uv_{j_2})\ga_{j_2},(Uv_{j_3})\ga_{j_3}} \cdots 
\inpro{(Uv_{j_m})\ga_{j_m},(Uv_{j_1})\ga_{j_1}} \cr
&= 
\overline{\ga_{j_1}} \inpro{v_{j_1} ,v_{j_2} } \ga_{j_2}
\overline{\ga_{j_2}} \inpro{v_{j_2} ,v_{j_3} } \ga_{j_3}
\cdots 
\overline{\ga_{j_m}} \inpro{v_{j_m} ,v_{j_1} } \ga_{j_1} \cr
&=\overline{\ga_{j_1}} \gD(v_{j_1},v_{j_2},\ldots,v_{j_m}) \ga_{j_1} \cr
&=\ga_{j_1}^{-1} \gD(v_{j_1},v_{j_2},\ldots,v_{j_m}) \ga_{j_1},
\end{align*}
the $m$-products are projective unitary invariants of $(v_j)$ up to similarity
(congruence),
and real frames are characterised by having real $m$-products.
Since a quaternion $q$ is determined up to similarity 
(conjugation in $\HH\setminus\{0\}$, which is equivalent 
to conjugation by unit scalars)  by its real part $\Re(q)$
and its norm $|q|$, 
we can define (reduced) $m$-products as a pair of real numbers
$$ \gD_r(v_{j_1},v_{j_2},\ldots,v_{j_m}):=(\Re(q),|q|), \qquad
q= \gD(v_{j_1},v_{j_2},\ldots,v_{j_m}). $$
These are projective unitary invariants. For the complex case, the $m$-products
are projective unitary invariants, which depend only on the cycle $(j_1,\ldots,j_m)$,
and a small set of $m$-products corresponding to a basis for the cycle space of the
frame graph of $(v_j)$ provide a set of invariants which characterise projective
unitary equivalence (see \cite{CW16}). We can not yet make a similar claim in the
quaternionic case, though we do imagine that the $m$-products do characterise projective
unitary equivalence. 

The dependence of $m$-products on only the associated $m$-cycle
in the frame graph does follow, by the calculation (for nonzero $m$-products)
$$ a^{-1} \gD(v_{j_1},v_{j_2},\ldots,v_{j_m}) a
= \gD(v_{j_2},v_{j_3},\ldots,v_{j_m},v_{j_1}) , \qquad
a={\inpro{v_{j_1},v_{j_2}}\over|\inpro{v_{j_1},v_{j_2}}|}, $$
and so, in addition to the $1$-products and $2$-products,  
we need only consider the $m$-products for $m\ge3$ which correspond to 
$m$-cycles in the frame graph, i.e., are nonzero.
To check that the $m$-products for two sequences are equal (up to conjugation),
it suffices to consider only the $m$-products corresponding to a cycle basis 
for the cycle space of the (common) frame graph:

\begin{lemma} (Cycle decomposition) For $1\le k\le m$, $n\ge1$, we have
\begin{align*}
&\gD(v_k,v_{k+1},\ldots,v_m,v_1,\ldots,v_{k-1})\gD(v_k,\ldots,v_1,w_1,\ldots,w_n) \cr
&\qquad = |\inpro{v_1,v_2}|^2|\inpro{v_2,v_3}|^2\cdots|\inpro{v_{k-1},v_k}|^2
\gD(v_k,v_{k+1},\ldots,v_m,v_1,w_1,w_2,\ldots,w_n). 
\end{align*}
\end{lemma}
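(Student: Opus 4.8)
The plan is to expand all three $m$-products explicitly as chains of inner products, read off the factors they share, and reduce the whole identity to a single scalar evaluation. First I would split each of the two products on the left. The rotated product $\gD(v_k,v_{k+1},\ldots,v_m,v_1,\ldots,v_{k-1})$ traces the full cycle $k\to k+1\to\cdots\to m\to 1\to\cdots\to k$, so it factors as $A\cdot B$, where $A=\inpro{v_{k+1},v_k}\cdots\inpro{v_m,v_{m-1}}\inpro{v_1,v_m}$ is the arc running from $v_k$ up to $v_1$ and $B=\inpro{v_2,v_1}\cdots\inpro{v_k,v_{k-1}}$ is the arc coming back down from $v_1$ to $v_k$. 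In the same way $\gD(v_k,\ldots,v_1,w_1,\ldots,w_n)$ factors as $C\cdot D$, where $C=\inpro{v_{k-1},v_k}\cdots\inpro{v_1,v_2}$ descends from $v_k$ to $v_1$ and $D=\inpro{w_1,v_1}\inpro{w_2,w_1}\cdots\inpro{w_n,w_{n-1}}\inpro{v_k,w_n}$ runs through the $w$'s and wraps back to $v_k$. The key structural observation, checked by the same bookkeeping, is that the right-hand product $\gD(v_k,\ldots,v_m,v_1,w_1,\ldots,w_n)$ is exactly $A\cdot D$.

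With these factorisations in hand, the left-hand side of the lemma is $A\cdot B\cdot C\cdot D$, while the target is $\bigl(\prod_{\ell=1}^{k-1}|\inpro{v_\ell,v_{\ell+1}}|^2\bigr)\,A\cdot D$. So the entire statement collapses to the claim that the overlap $B\cdot C$ equals the real scalar $\prod_{\ell=1}^{k-1}|\inpro{v_\ell,v_{\ell+1}}|^2$. Once that is established the scalar is real, hence central, and may be pulled out from between $A$ and $D$ without any further reordering, which finishes the proof.

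The heart of the argument, and the only place where noncommutativity could cause trouble, is the evaluation of $B\cdot C$. I would compute it from the centre outward. The innermost pair is $\inpro{v_k,v_{k-1}}\inpro{v_{k-1},v_k}$, namely the last factor of $B$ times the first factor of $C$; by conjugate symmetry $\inpro{v_{k-1},v_k}=\overline{\inpro{v_k,v_{k-1}}}$, so this product equals $|\inpro{v_{k-1},v_k}|^2\in\RR$. Being real it is central, so it commutes out and exposes the next pair $\inpro{v_{k-1},v_{k-2}}\inpro{v_{k-2},v_{k-1}}=|\inpro{v_{k-2},v_{k-1}}|^2$, and so on. Peeling off all $k-1$ conjugate pairs collapses $B\cdot C$ to $\prod_{\ell=1}^{k-1}|\inpro{v_\ell,v_{\ell+1}}|^2$, exactly the claimed coefficient. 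I expect the genuine labour to lie entirely in the careful index bookkeeping (the wrap-around terms and the descending block $v_k,\ldots,v_1$), and it is worth verifying the boundary cases $k=1$ (empty coefficient, with $B$ and $C$ both empty) and $k=m$ to confirm that the arcs $A$ and $B$ degenerate as they should.
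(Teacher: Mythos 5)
Your proposal is correct and follows essentially the same route as the paper's proof: both expand the $m$-products into chains of inner products and observe that the overlapping ascending/descending arcs collapse into the conjugate pairs $\inpro{v_{\ell+1},v_\ell}\inpro{v_\ell,v_{\ell+1}}=|\inpro{v_\ell,v_{\ell+1}}|^2\in\RR$, which are central and can be pulled out in front of the remaining product. Your labelling of the arcs $A,B,C,D$ and the boundary checks $k=1$, $k=m$ are just a more structured presentation of the paper's one-line expansion argument.
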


\begin{proof}
Expanding the left hand side gives
\begin{align*}
&\inpro{ v_k , v_{k+1} } \inpro{ v_{k+1} , v_{k+2} } \cdots 
	\inpro{ v_{m-1} , v_m }
\inpro{ v_m , v_1 }
	\inpro{ v_1 , v_2 }
	\cdots
	\inpro{ v_{k-2} , v_{k-1} }
	\inpro{ v_{k-1} , v_k } \cr
&\qquad\times 
	\inpro{ v_k , v_{k-1} }
	\inpro{ v_{k-1} , v_{k-2} } 
	\cdots 
	\inpro{ v_{2} , v_1 }
\inpro{ v_1 , w_1 }
	\inpro{ w_1 , w_2 }
	\cdots\inpro{ w_{n-1} , w_{n} }
	\inpro{ w_{n} , v_k }, 
\end{align*}
which simplifies to the right hand side, since 
$\inpro{v_{j-1},v_{j}}\inpro{v_{j},v_{j-1}}=|\inpro{v_{j-1},v_{j}}|^2\in\RR$ 
commutes with any quaternion.
\end{proof}

This gives the following condition for projective unitary equivalence.

\begin{theorem}
A necessary condition for sequences $(v_j)$ and $(w_j)$ of $n$ vectors in $\Hd$ to be
projectively unitarily equivalent is that the $m$-products 
corresponding to a cycle basis for the frame graph are
are equal (up to conjugation).
\end{theorem}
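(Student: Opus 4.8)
The plan is to combine two facts already established in the excerpt: the invariance of the $2$-products, which pins down the frame graph, and the transformation rule for general $m$-products under projective unitary equivalence, which yields invariance up to conjugation. The necessity direction is the easy half, so the work is mostly bookkeeping.

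First I would record that projective unitary equivalence means $w_j=(Uv_j)\ga_j$ for a unitary $U$ and unit-norm scalars $\ga_j$, so that (\ref{projunitequivGramian}) gives $\inpro{w_j,w_k}=\overline{\ga_k}\inpro{v_j,v_k}\ga_j$. Since the quaternion norm is multiplicative and $|\ga_j|=|\ga_k|=1$, this forces $|\inpro{w_j,w_k}|=|\inpro{v_j,v_k}|$ for all $j,k$. Hence the $2$-products $\gD(v_j,v_k)=|\inpro{v_j,v_k}|^2$ coincide with $\gD(w_j,w_k)$, and in particular $\inpro{w_j,w_k}\ne0$ exactly when $\inpro{v_j,v_k}\ne0$. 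Consequently $(v_j)$ and $(w_j)$ share the same frame graph, so a cycle basis of that common graph is meaningful for both sequences simultaneously — the small point that makes the statement well-posed.

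Next I would invoke the transformation rule computed just before the Cycle decomposition lemma, namely
$$ \gD\bigl((Uv_{j_1})\ga_{j_1},\ldots,(Uv_{j_m})\ga_{j_m}\bigr)
=\overline{\ga_{j_1}}\,\gD(v_{j_1},\ldots,v_{j_m})\,\ga_{j_1}. $$
Applied to any cycle $(j_1,\ldots,j_m)$ in the chosen cycle basis, this shows that the $m$-product of $(w_j)$ along that cycle is obtained from the corresponding $m$-product of $(v_j)$ by conjugation with the single unit scalar $\ga_{j_1}$. Thus the two are equal up to conjugation; equivalently, the reduced $m$-products $\gD_r=(\Re(q),|q|)$ are literally equal, since conjugation preserves both the real part and the norm of a quaternion. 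This is the entire content of the necessary condition.

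I do not expect a genuine obstacle, but it is worth flagging what the cycle-basis language is doing. For \emph{necessity} one does not even need the Cycle decomposition lemma: every $m$-product is invariant up to conjugation, so \emph{a fortiori} are those indexed by a cycle basis. The lemma and the restriction to a basis become essential only for a prospective converse — reconstructing all cycle $m$-products, and thereby (conjecturally) projective unitary equivalence itself, from the finitely many basis products. The only step requiring care here is the first one, ensuring the frame graphs coincide so that ``cycle basis for the frame graph'' refers unambiguously to both $(v_j)$ and $(w_j)$.
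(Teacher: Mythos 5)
Your proof is correct and takes essentially the same route as the paper: the displayed transformation rule $\gD\bigl((Uv_{j_1})\ga_{j_1},\ldots,(Uv_{j_m})\ga_{j_m}\bigr)=\overline{\ga_{j_1}}\,\gD(v_{j_1},\ldots,v_{j_m})\,\ga_{j_1}$ gives invariance up to conjugation of every $m$-product, hence in particular of those indexed by a cycle basis, with your $2$-product observation supplying the same well-posedness point (common frame graph) that the paper uses implicitly. Your closing remark is also accurate: the Cycle decomposition lemma is not needed for necessity, and serves in the paper only to show that the cycle-basis products control all cycle products, which is what matters for a prospective converse.
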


In the complex setting, this says that the $m$-products are equal, and the
converse is proved by explicitly constructing scalars $\ga_j$
which satisfy (\ref{projunitequivGramian}). 
The difficulties in extending
this converse to the quaternionic setting include the fact that
for $w_j=(Uv_j)\ga_j$, 
\begin{equation}
\label{gaconstraint}
\gD(w_{j_1},w_{j_2},\ldots,w_{j_m}) 
= \overline{\ga_{j_1}} \gD(v_{j_1},v_{j_2},\ldots,v_{j_m}) \ga_{j_1},
\end{equation}
which puts further constraints on the $\ga_j$
(for $m\ge3$ and the $m$-product nonzero).
Indeed, in the complex setting one can assume that any $\ga_j$ is $1$, 
simply by replacing $U$ by the unitary matrix $\ga_j U$. 
Nevertheless, those parts of the theory that we do have allow us to 
investigate such things as the symmetries of lines, as our final example shows.

\begin{example} 
\label{sixlinesexample}
Consider the six tight equiangular lines in $\HH^2$
at angle $\gl=c^2={2\over5}$ of 
\cite{B20}
$$ 
v_1=\pmat{ 1\cr 0 }, \
v_2=\pmat{ {\sqrt{2}\over\sqrt{5}} \cr {\sqrt{3}\over\sqrt{5}} }, \
v_3=\pmat{ {\sqrt{2}\over\sqrt{5}} \cr -{\sqrt{3}\over4\sqrt{5}}+{3\over4}i }, \
v_4=\pmat{ {\sqrt{2}\over\sqrt{5}} \cr -{\sqrt{3}\over4\sqrt{5}}-{1\over4}i+{1\over\sqrt{2}}j },  $$
$$
v_5=\pmat{ {\sqrt{2}\over\sqrt{5}} \cr -{\sqrt{3}\over4\sqrt{5}}-{1\over4}i-{1\over2\sqrt{2}}j+{\sqrt{3}\over2\sqrt{2}}k }, \
v_6=\pmat{ {\sqrt{2}\over\sqrt{5}} \cr
-{\sqrt{3}\over4\sqrt{5}}-{1\over4}i-{1\over2\sqrt{2}}j-{\sqrt{3}\over2\sqrt{2}}k }, $$
which are said to have ``symmetry group'' $A_6$. 
The reduced $m$-products $\gD_r(v_{j_1},\ldots,v_{j_m})$ of distinct vectors 
for $m=1,2,3,4,6$ are all equal, 
taking the values
$$ (1,1), \quad ({2\over5},c^2), \quad ({1\over10},c^3), \quad (-{1\over50},c^4), 
\quad (-{11\over 250},c^6) $$
respectively, which puts no restriction on the possible projective symmetry group
of the lines. However, the reduced $5$-products (of distinct vectors) take two values
$$  (-{25 \pm 9\sqrt{5}\over 500},c^5), $$
and the permutations of the vectors which preserve these
$5$-products is indeed $A_6$. 
Thus the projective symmetry group is a subgroup of $A_6$.
With the present theory, this does not yet establish that $A_6$ is the 
projective symmetry group.

We now seek a corresponding projective unitary symmetry for each $\gs\in A_6$, 
i.e., a unitary 
matrix $U_\gs$ and corresponding scalars $\ga_j$ (also depending on 
$\gs$) for which 
$$ w_j=v_{\gs j}=(U_\gs v_j)\ga_j, \qquad\forall j. $$
Once the unit scalars $\ga_j$ corresponding to a basis $[v_j]_{j\in J}$ of vectors 
from $(v_j)$ are known, the matrix $U_\gs$ is uniquely determined by
$$ U_\gs [v_j\ga_j]_{j\in J}=[v_{\gs j}]_{j\in J}
\Implies U_\gs =[v_{\gs j}]_{j\in J} [v_j\ga_j]_{j\in J}^{-1}, $$
and it can then be checked whether or not the $U_\gs$ is unitary and permutes the other lines.
By (\ref{gaconstraint}),
for $j,k,\ell$ distinct, the unit scalar $\ga_j$ satisfies 
$$ \ga_j\gD(v_{\gs j},v_{\gs k},v_{\gs\ell})=\gD(v_j,v_k,v_\ell)\ga_j, $$
which gives a homogeneous linear system of four equations for the 
four real coordinates of $\ga_j$. In the cases considered, this had 
a unique solution of unit norm up to a choice of sign, which was 
made in order to obtain a unitary matrix $U_\gs$.
For the generators 
$$ a=(1 2)(3 4) \quad\hbox{(order $2$)}, \qquad
b=(1 2 3 5)(4 6) \quad\hbox{(order $4$)} $$
for $A_6$, we obtained
$$ \ga_1=\ga_2=-{\sqrt{2}\over\sqrt{3}}i+{1\over\sqrt{3}}k, 
\qquad U_a = \pmat{
{2\over\sqrt{15}}i-{\sqrt{2}\over\sqrt{15}}j & {\sqrt{2}\over\sqrt{5}}i -{1\over\sqrt{5}}j \cr
{\sqrt{2}\over\sqrt{5}}i -{1\over\sqrt{5}}j
& 
-{2\over\sqrt{15}}i+{\sqrt{2}\over\sqrt{15}}j 
 }, \quad U_a^2=-I, $$ 
and
$$ \ga_1= {1\over2\sqrt{2}}+{\sqrt{5}\over2\sqrt{6}}i - {3-\sqrt{5}\over4\sqrt{3}}j
-{\sqrt{5}+1\over4}k,\quad
\ga_2={\sqrt{5}\over 4} +{1\over4\sqrt{3}}i-{3\sqrt{5}+1\over4\sqrt{6}}j 
-{\sqrt{5}-1\over4\sqrt{2}}, $$
$$ U_b = \pmat{
{1\over2\sqrt{5}}+{1\over2\sqrt{3}}i+{3-\sqrt{5}\over2\sqrt{30}}j+{\sqrt{5}+1\over2\sqrt{10}}k 
& {\sqrt{3}\over2\sqrt{10}}-{1\over2\sqrt{2}}i+{3+\sqrt{5}\over4\sqrt{5}}j 
-{\sqrt{3}\over5+\sqrt{5}} k \cr 
{\sqrt{3}\over2\sqrt{10}}+{1\over2\sqrt{2}}i +{3-\sqrt{5}\over4\sqrt{5}}j 
+{\sqrt{3}\over5-\sqrt{5}}k &
-{1\over2\sqrt{5}}+{1\over2\sqrt{3}}i-{3\sqrt{5}+5\over10\sqrt{6}}j
+{\sqrt{5}-1\over2\sqrt{10}}k }, \quad U_b^4=-I. $$
These unitary matrices $U_a$ and $U_b$ do give the projective unitary symmetries
supposed. Moreover, they generate the double cover $2\cdot A_6$ of $A_6$,
and so we have verified that $A_6$ is indeed the projective symmetry group
of the six equiangular lines in $\HH^2$. We note that our method did not
require prior knowledge of what the symmetry group was.
\end{example}

The action group of the faithful representation of $2\cdot A_6$
obtained in Example \ref{sixlinesexample} contains $40$ reflections 
(of order $3$), and it is an irreducible reflection group which 
appears on the list of \cite{C80}. 
The vectors giving the lines
are eigenvectors of nontrivial elements of the group, and so the 
six equiangular lines in $\HH^2$ can be constructed directly 
from the reflection group as a group frame (or even from the abstract group $2\cdot A_6$) \cite{W24}.


The sets of five and six equiangular lines in $\HH^2$ were first calculated
in \cite{K08} using the Hopf map. Though this technique does not
immediately generalise to other dimensions, like that of \cite{B20},
we recount the essential details, as it sheds further light on the geometry
of these lines.
The {\bf Hopf map} $\psi$ maps a point $\vec{a}=(a_1,\ldots,a_5)$
on the unit sphere in $\RR^5$ to a line in the projective space $\HH\PP^1$,
i.e., a the unit vector $v\in\HH^2$ in the line with $v_2\ge0$, and is
given by $\psi(0,0,0,0,1):=(1,0)$ and
$$ \psi(\vec{a}) := \pmat{ {a\over\sqrt{2(1-a_5)}}\cr{\sqrt{1-a_5}\over\sqrt{2}}}, 
\qquad a:=a_1+a_2i+a_3j+a_4k, \quad a_5\ne1.  $$
A calculation shows that
\begin{align}
\label{Hopfinpro}
|\inpro{\psi(\vec{a}),\psi(\vec{b})}_\HH|^2
&= {1+\inpro{\vec{a},\vec{b}}_\RR\over 2}, \qquad\forall \vec{a},\vec{b},
\end{align}
so the $n\ge 3$ unit vectors $(v_j)$, $v_j=\psi(\vec{v_j})\in\HH^2$, 
give tight equiangular lines if and only if 
$$ |\inpro{v_j,v_k}|^2 ={1+\inpro{\vec{v_j},\vec{v_k}}\over 2} 
={n-2\over2(n-1)}
\Iff \inpro{\vec{v_j},\vec{v_k}}=-{1\over n-1}. $$
This latter condition says that the vectors $(\vec{v_j})$ are the vertices of a
regular $n$-vertex simplex embedded in the unit sphere in $\RR^5$, 
which can be done for $n=3,4,5,6$, with the corresponding image $(v_j)$ giving 
$n$ tight equiangular lines in $\HH^2$. 
Moreover, for $n=3$ we get real lines
by choosing the simplex in $\{x:x=(x_1,0,0,0,x_5)\}$,
and complex lines for $n=4$
by choosing the simplex in $\{x:x=(x_1,x_2,0,0,x_5)\}$.

\subsection{Concluding remarks}

We have shown how much of the theory of tight frames extends to quaternionic 
Hilbert space, with the characterisation of projective unitary equivalence
of frames being the aspect that most depends intrinsically on the commutativity 
of the complex numbers. The notions of canonical coordinates and the canonical
Gramian \cite{W18} also extend to $\HH$-vector spaces. In particular,
there is a unique $\HH$-inner product for which a finite spanning set for 
an $\HH$-vector space becomes a normalised tight frame.

Our focus has been on group frames and equiangular lines. The maximal set
of six equiangular lines in $\HH^2$ comes as the orbit of a quaternionic
reflection group, just as the SIC of four equiangular lines in $\CC^2$ is
the orbit of a complex reflection group. However, 
the known SICs in $\Cd$ (with one exception) are orbits of the Weyl-Heisenberg group,
which is not a reflection group for $d\ge3$. 
The key to constructing quaternionic
equiangular lines in this way will be knowing ``the right group''. 
This group might come from numerical
constructions, using the techniques of this last section, 
or from the theory of group representations over $\HH$ (which is in
its infancy). The construction of sets of tight quaternionic lines may
also offer insight into Zauner's conjecture.
Another direction of similar interest is that of optimal packings
in quaternionic projective space $\HH\PP^k$.

Many of our results say, in some sense, that ``there is more room in $\Hd$ than in $\Cd$''.
In particular, we offer the following variation of Conjecture \ref{HlinesConjectI}, 
which does not implicitly reference Zauner's conjecture.

\begin{conjecture}
\label{HlinesConjectII}
The maximal number of quaternionic equiangular lines in $\Hd$ is strictly larger
than the maximal number of complex equiangular lines in $\Cd$, for each $d\ge 2$.
\end{conjecture}

%

\medskip
\center{{\bfbigtype Conflict of interest statement}}
\smallskip

The author declared that he has no conflict of interest.

\bibliographystyle{alpha}
\bibliography{references}

\begin{thebibliography}{ACFW18}

\bibitem[ACFW18]{ACFW18}
Marcus Appleby, Tuan-Yow Chien, Steven Flammia, and Shayne Waldron.
\newblock Constructing exact symmetric informationally complete measurements
  from numerical solutions.
\newblock {\em J. Phys. A}, 51(16):165302, 40, 2018.

\bibitem[BF03]{BF03}
John~J. Benedetto and Matthew Fickus.
\newblock Finite normalized tight frames.
\newblock {\em Adv. Comput. Math.}, 18(2-4):357--385, 2003.

\bibitem[BW13]{BW13}
Helen Broome and Shayne Waldron.
\newblock On the construction of highly symmetric tight frames and complex
  polytopes.
\newblock {\em Linear Algebra Appl.}, 439(12):4135--4151, 2013.

\bibitem[BW25]{BW25}
Zachary Buckley and Shayne Waldron.
\newblock Quaternionic {MUBs} in {$\HH^2$} and their reflection symmetries.
\newblock preprint, 7 2025.

\bibitem[CKM16]{CKM16}
Henry Cohn, Abhinav Kumar, and Gregory Minton.
\newblock Optimal simplices and codes in projective spaces.
\newblock {\em Geom. Topol.}, 20(3):1289--1357, 2016.

\bibitem[Coh80]{C80}
Arjeh~M. Cohen.
\newblock Finite quaternionic reflection groups.
\newblock {\em J. Algebra}, 64(2):293--324, 1980.

\bibitem[Cuy95]{C95}
Hans Cuypers.
\newblock Regular quaternionic polytopes.
\newblock {\em Linear Algebra Appl.}, 226/228:311--329, 1995.

\bibitem[CW11]{CW11}
Tuan-Yow Chien and Shayne Waldron.
\newblock A classification of the harmonic frames up to unitary equivalence.
\newblock {\em Appl. Comput. Harmon. Anal.}, 30(3):307--318, 2011.

\bibitem[CW16]{CW16}
Tuan-Yow Chien and Shayne Waldron.
\newblock A characterization of projective unitary equivalence of finite frames
  and applications.
\newblock {\em SIAM J. Discrete Math.}, 30(2):976--994, 2016.

\bibitem[CW18]{CW18}
Tuan-Yow Chien and Shayne Waldron.
\newblock The projective symmetry group of a finite frame.
\newblock {\em New Zealand J. Math.}, 48:55--81, 2018.

\bibitem[ET20]{B20}
Boumediene Et-Taoui.
\newblock Quaternionic equiangular lines.
\newblock {\em Adv. Geom.}, 20(2):273--284, 2020.

\bibitem[FIJM24]{FIJM24}
Matthew Fickus, Joseph~W. Iverson, John Jasper, and Dustin~G. Mixon.
\newblock Equi-isoclinic subspaces from symmetry, 2024.

\bibitem[FJMW17]{FJMW17}
Matthew Fickus, John Jasper, Dustin~G. Mixon, and Cody~E. Watson.
\newblock A brief introduction to equi-chordal and equi-isoclinic tight fusion
  frames, 2017.

\bibitem[GMP13]{GMP13}
Riccardo Ghiloni, Valter Moretti, and Alessandro Perotti.
\newblock Continuous slice functional calculus in quaternionic {H}ilbert
  spaces.
\newblock {\em Rev. Math. Phys.}, 25(4):1350006, 83, 2013.

\bibitem[Hog76]{H76b}
S.~G. Hoggar.
\newblock Quaternionic equi isoclinic {$n$}-planes.
\newblock {\em Ars Combin.}, 2:11--13, 1976.

\bibitem[Hog82]{H82}
S.~G. Hoggar.
\newblock {$t$}-designs in projective spaces.
\newblock {\em European J. Combin.}, 3(3):233--254, 1982.

\bibitem[Hog98]{H98}
Stuart~G. Hoggar.
\newblock {$64$} lines from a quaternionic polytope.
\newblock {\em Geom. Dedicata}, 69(3):287--289, 1998.

\bibitem[Hog77]{H76}
S.~G. Hoggar.
\newblock New sets of equi-isoclinic {$n$}-planes from old.
\newblock {\em Proc. Edinburgh Math. Soc. (2)}, 20(4):287--291, 1976/77.

\bibitem[KF08]{K08}
Mahdad Khatirinejad~Fard.
\newblock {\em Regular structures of lines in complex spaces}.
\newblock ProQuest LLC, Ann Arbor, MI, 2008.
\newblock Thesis (Ph.D.)--Simon Fraser University (Canada).

\bibitem[KMW25]{KMW19}
Emily~J. King, Dustin~G. Mixon, and Shayne Waldron.
\newblock Testing isomorphism between tuples of subspaces, 2025.

\bibitem[KPS21]{KPS21}
Hadi Kharaghani, Thomas Pender, and Sho Suda.
\newblock Balancedly splittable orthogonal designs and equiangular tight
  frames.
\newblock {\em Des. Codes Cryptogr.}, 89(9):2033--2050, 2021.

\bibitem[KTS17]{KTS17}
M.~Khokulan, K.~Thirulogasanthar, and S.~Srisatkunarajah.
\newblock Discrete {F}rames on {F}inite {D}imensional {L}eft {Q}uaternion
  {H}ilbert {S}paces.
\newblock {\em Axioms}, 6(4):3, Feb 2017.

\bibitem[LS73]{LS73}
P.~W.~H. Lemmens and J.~J. Seidel.
\newblock Equi-isoclinic subspaces of {E}uclidean spaces.
\newblock {\em Nederl. Akad. Wetensch. Proc. Ser. A {\bf 76} Indag. Math.},
  35:98--107, 1973.

\bibitem[NS22]{NS22}
Tom Needham and Clayton Shonkwiler.
\newblock Admissibility and frame homotopy for quaternionic frames.
\newblock {\em Linear Algebra Appl.}, 645:237--255, 2022.

\bibitem[Rod14]{R14}
Leiba Rodman.
\newblock {\em Topics in quaternion linear algebra}.
\newblock Princeton Series in Applied Mathematics. Princeton University Press,
  Princeton, NJ, 2014.

\bibitem[SS95]{SS95}
G.~Scolarici and L.~Solombrino.
\newblock Notes on quaternionic group representations.
\newblock {\em Internat. J. Theoret. Phys.}, 34(12):2491--2500, 1995.

\bibitem[Str81]{S81}
W.~I. Stringham.
\newblock Determination of the {F}inite {Q}uaternion {G}roups.
\newblock {\em Amer. J. Math.}, 4(1-4):345--357, 1881.

\bibitem[VSSS20]{VSS20}
Virender, S.~K. Sharma, Ghanshyam Singh, and Soniya Sahu.
\newblock On frames in finite dimensional quaternionic {H}ilbert space.
\newblock {\em Palest. J. Math.}, 9(1):511--522, 2020.

\bibitem[VW05]{VW05}
Richard Vale and Shayne Waldron.
\newblock Tight frames and their symmetries.
\newblock {\em Constr. Approx.}, 21(1):83--112, 2005.

\bibitem[VW16]{VW16}
Richard Vale and Shayne Waldron.
\newblock The construction of {$G$}-invariant finite tight frames.
\newblock {\em J. Fourier Anal. Appl.}, 22(5):1097--1120, 2016.

\bibitem[Wal03]{W03}
Shayne Waldron.
\newblock Generalized {W}elch bound equality sequences are tight frames.
\newblock {\em IEEE Trans. Inform. Theory}, 49(9):2307--2309, 2003.

\bibitem[Wal13]{W13}
Shayne Waldron.
\newblock Group frames.
\newblock In {\em Finite frames}, Appl. Numer. Harmon. Anal., pages 171--191.
  Birkh\"auser/Springer, New York, 2013.

\bibitem[Wal18]{W18}
Shayne F.~D. Waldron.
\newblock {\em An introduction to finite tight frames}.
\newblock Applied and Numerical Harmonic Analysis. Birkh\"{a}user/Springer, New
  York, 2018.

\bibitem[Wal20a]{W20a}
Shayne Waldron.
\newblock The {F}ourier transform of a projective group frame.
\newblock {\em Appl. Comput. Harmon. Anal.}, 49(1):74--98, 2020.

\bibitem[Wal20b]{W20b}
Shayne Waldron.
\newblock Spherical designs and their {G}ramian.
\newblock preprint, 1 2020.

\bibitem[Wal24]{W24}
Shayne Waldron.
\newblock The geometry of the six quaternionic equiangular lines in
  $\mathbb{H}^2$, 2024.

\bibitem[Zau10]{Zau10}
Gerhard Zauner.
\newblock {\em Quantum {D}esigns: {F}oundations of a non-commutative {D}esign
  {T}heory}.
\newblock PhD thesis, University of Vienna, University of Vienna, 1 2010.
\newblock English translation of 1999 Doctorial thesis including a new preface.

\bibitem[Zha97]{Z97}
Fuzhen Zhang.
\newblock Quaternions and matrices of quaternions.
\newblock {\em Linear Algebra Appl.}, 251:21--57, 1997.

\end{thebibliography}
\nocite{*}




\end{document}